\title[Adjoint orbits of the Jacobi Group]{\large Adjoint orbits of the Jacobi Group}
\begin{document}

\author{Yong-Jae Kwon}
\author{Jae-Hyun Yang}
\address{Yong-Jae Kwon\newline \indent Department of Mathematics, Inha University,Incheon 22212, Korea}
\email{22151060@inha.edu}
\address{Jae-Hyun Yang\newline \indent Department of Mathematics, Inha University,Incheon 22212, Korea}
\email{jhyang@inha.ac.kr}
\newtheorem{theorem}{Theorem}[section]
\newtheorem{corollary}[theorem]{Corollary}
\newtheorem{lemma}[theorem]{Lemma}
\newtheorem{proposition}[theorem]{Proposition}
\newtheorem{remark}[theorem]{Remark}
\theoremstyle{definition}
\newtheorem{definition}[theorem]{Definition}

\renewcommand{\theequation}{\thesection.\arabic{equation}}
\renewcommand{\thetheorem}{\thesection.\arabic{theorem}}
\renewcommand{\thelemma}{\thesection.\arabic{lemma}}
\newcommand{\BR}{\mathbb R}
\newcommand{\BQ}{\mathbb Q}
\newcommand{\BT}{\mathbb T}
\newcommand{\BM}{\mathbb M}
\newcommand{\bn}{\bf n}
\def\charf {\mbox{{\text 1}\kern-.24em {\text l}}}
\newcommand{\BC}{\mathbb C}
\newcommand{\BZ}{\mathbb Z}

\newcommand{\HG}{{\mathscr H}_g}
\newcommand{\GG}{\G_g^{\star}}
\newcommand{\XG}{{\mathscr X}_\BR^g}
\newcommand{\XGI}{{\mathscr X}_{(\lambda,i)}^g}

\newcommand{\OHG}{\overline{{\mathscr H}_g} }
\newcommand{\OXG}{\overline{{\mathscr X}_\BR^g} }

\newcommand\Mg{{\mathcal M}_g}
\newcommand\Rg{{\mathfrak R}_g}
\newcommand\PG{{\mathcal P}_g}

\thanks{\noindent{2010 Mathematics Subject Classification:} Primary 14L40,\;14L35; Secondary 22Exx.\\
\indent Keywords and phrases: the Jacobi group,
nilpotent orbits, the Kostant-Sekiguchi correspondence.  \\
\indent This work was supported by INHA UNIVERSITY Research Grant.}


\begin{abstract}
	In this article, we study adjoint orbits of the Jacobi group, and in particular describe nilpotent orbits explicitly.
\end{abstract}

\maketitle

\newcommand\tr{\triangleright}
\newcommand\al{\alpha}
\newcommand\be{\beta}
\newcommand\g{\gamma}
\newcommand\gh{\Cal G^J}
\newcommand\G{\Gamma}
\newcommand\de{\delta}
\newcommand\e{\epsilon}
\newcommand\z{\zeta}
\newcommand\vth{\vartheta}
\newcommand\vp{\varphi}
\newcommand\om{\omega}
\newcommand\p{\pi}
\newcommand\la{\lambda}
\newcommand\lb{\lbrace}
\newcommand\lk{\lbrack}
\newcommand\rb{\rbrace}
\newcommand\rk{\rbrack}
\newcommand\s{\sigma}
\newcommand\w{\wedge}
\newcommand\fgj{{\frak g}^J}
\newcommand\lrt{\longrightarrow}
\newcommand\lmt{\longmapsto}
\newcommand\lmk{(\lambda,\mu,\kappa)}
\newcommand\Om{\Omega}
\newcommand\ka{\kappa}
\newcommand\ba{\backslash}
\newcommand\ph{\phi}
\newcommand\M{{\Cal M}}
\newcommand\bA{\bold A}
\newcommand\bH{\bold H}
\newcommand\D{\Delta}

\newcommand\Hom{\text{Hom}}
\newcommand\cP{\Cal P}

\newcommand\cH{\Cal H}

\newcommand\pa{\partial}

\newcommand\pis{\pi i \sigma}
\newcommand\sd{\,\,{\vartriangleright}\kern -1.0ex{<}\,}
\newcommand\wt{\widetilde}
\newcommand\fg{\mathfrak{g}}
\newcommand\fk{\mathfrak{k}}
\newcommand\fp{\mathfrak{p}}
\newcommand\fs{\mathfrak{s}}
\newcommand\fh{\mathfrak{h}}
\newcommand\Cal{\mathcal}

\newcommand\fn{{\mathfrak{n}}}
\newcommand\fa{{\frak a}}
\newcommand\fm{{\frak m}}
\newcommand\fq{{\frak q}}
\newcommand\CP{{\mathcal P}_g}
\newcommand\Hgh{{\mathbb H}_g \times {\mathbb C}^{(h,g)}}
\newcommand\BD{\mathbb D}
\newcommand\BH{\mathbb H}
\newcommand\CCF{{\mathcal F}_g}
\newcommand\CM{{\mathcal M}}
\newcommand\Ggh{\Gamma_{g,h}}
\newcommand\Chg{{\mathbb C}^{(h,g)}}
\newcommand\Yd{{{\partial}\over {\partial Y}}}
\newcommand\Vd{{{\partial}\over {\partial V}}}

\newcommand\Ys{Y^{\ast}}
\newcommand\Vs{V^{\ast}}
\newcommand\LO{L_{\Omega}}
\newcommand\fac{{\frak a}_{\mathbb C}^{\ast}}

\newcommand\OW{\overline{W}}
\newcommand\OP{\overline{P}}
\newcommand\OQ{\overline{Q}}
\newcommand\Dg{{\mathbb D}_g}
\newcommand\Hg{{\mathbb H}_g}

\newcommand\OBD{ {\overline{\BD}}_g }

\newcommand\La{\Lambda}
\newcommand\FA{\mathfrak A}
\newcommand\FL{\mathfrak L}
\newcommand\FT{\mathfrak T}

\newcommand\POB{ {{\partial}\over {\partial{\overline \Omega}}} }
\newcommand\PZB{ {{\partial}\over {\partial{\overline Z}}} }
\newcommand\PX{ {{\partial}\over{\partial X}} }
\newcommand\PY{ {{\partial}\over {\partial Y}} }
\newcommand\PU{ {{\partial}\over{\partial U}} }
\newcommand\PV{ {{\partial}\over{\partial V}} }
\newcommand\PO{ {{\partial}\over{\partial \Omega}} }
\newcommand\PZ{ {{\partial}\over{\partial Z}} }
\newcommand\PW{ {{\partial}\over{\partial W}} }
\newcommand\PWB{ {{\partial}\over {\partial{\overline W}}} }
\newcommand\OVW{\overline W}

\newcommand\PR{{\mathcal P}_g\times {\mathbb R}^{(h,g)}}
\newcommand\Rmn{{\mathbb R}^{(h,g)}}
\newcommand\Gnm{GL_{g,h}}

\newcommand\fgc{{\frak g}_{\mathbb C}}
\newcommand\fkc{{\frak k}_{\BC}}
\newcommand\fpc{{\frak p}_{\BC}}

\begin{section}{{\large\bf Introduction}}
\vskip 0.3cm
It is known that if $G$ is a real reductive Lie group, there are only finitely
many nilpotent orbits and that there is the so-called Kostant-Sekiguchi correspondence
between the set of all adjoint nilpotent $G$-orbits in the Lie algebra $\mathfrak{g}$ of $G$
and the set of all $K_\mathbb{C}$-orbits in $\mathfrak{p}_\mathbb{C}$,
where $K_\mathbb{C}$ is the complexification of a maximal compact subgroup $K$
of $G$ and $\mathfrak{g}_\mathbb{C}=\mathfrak{k}_\mathbb{C}+\mathfrak{p}_\mathbb{C}$ is
the Cartan decomposition of the complexification of $\mathfrak{g}_\mathbb{C}$ of $\mathfrak{g}$\,(cf.\,\cite{S-V, Se, Ve,Vo}).

\vskip 3mm
In this paper, we consider the Jacobi group
\begin{equation*}
G^J=SL(2,\mathbb{R})\ltimes H_{\BR},
\end{equation*}
where is the semidirect product of the special linear group $SL(2,\BR)$ and the
three dimensional Heisenberg group $H_{\BR}$. Here
\begin{equation*}
H_{\BR}=\left\{\,(\la,\mu,\kappa)\,\bigg| \ \la,\mu,\kappa\in
\BR\, \right\}
\end{equation*}
is the 2-step nilpotent Lie group with the following multiplication law
\begin{equation*}
(\la,\mu,\kappa)\circ
(\la',\mu',\kappa')=(\la+\la',\mu+\mu',\kappa+
\kappa'+\la\mu'-\la'\mu).
\end{equation*}
The Jacobi group $G^J$ is a {\sf non-reductive} Lie group endowed with the
following multiplication
\begin{equation}
(M,(\lambda,\mu,\kappa))\cdot(M',(\lambda',\mu',\kappa')) =\,
(MM',(\tilde{\lambda}+\lambda',\tilde{\mu}+ \mu',
\kappa+\kappa'+\tilde{\lambda}\mu' -\la'\tilde{\mu})).
\end{equation}
Let $\mathbb{H}$ be the Poincar\'{e} upper half plane. Then $G^J$ acts on
the Siegel-Jacobi space $\mathbb{H}\times\mathbb{C}$ transitively by
\begin{equation}\label{1eq05}
(M,(\la,\mu,\kappa))\cdot(\tau,z)=(M<\tau>,(z+\la
\tau+\mu)(c\tau+d)^{-1}),
\end{equation}
where $M=\begin{pmatrix} a&b\\
c&d\end{pmatrix} \in SL(2,\BR),\ (\lambda,\mu, \kappa)\in
H_{\BR}$ and $(\tau,z)\in  \mathbb{H}\times \BC$ and
\begin{equation*}
M<\tau>=(a\tau+b)(c\tau+d)^{-1}.
\end{equation*}
It is easily seen that
the stabilizer $K^J$ of $G^J$ at $(i,0)$ under the action (\ref{1eq05}) is
given by
\begin{equation}
K^J=\left\{(k,(0,0,\kappa))\vert\ k\in K,\
\kappa\in\BR\,\right\}\cong K\times\BR,
\end{equation}
where $K=SO(2)$ is a maximal compact subgroup of $SL(2,\mathbb{R})$. Then the Siegel-Jacobi space
$\mathbb{H}\times\mathbb{C}$ is biholomorphic to the Hermitian-K\"{a}hler homogeneous
space $G^J/K^J$ via
\begin{equation}
\begin{array}{ccc}
	 G^J/K^J& \longrightarrow &\hspace{-60pt}\mathbb{H}\times\mathbb{C}\\
	\rotatebox{90}{\ensuremath{\in}} &  & \hspace{-60pt}\rotatebox{90}{\ensuremath{\in}}\\
	gK^J & \longmapsto &g\cdot(i,0),\qquad g\in G^J.
\end{array}
\end{equation}
Therefore the Jacobi group $G^J$ plays an important role in number theory (e.g. theory of Jacobi forms)\,\cite{EZ, YJH1, YJH2, YJH3, YJH4, YJH5, YJH6, YJH14, YJH16, YY, Z},
algebraic geometry\,\cite{YJH8, YJH9, YJH16, YJH18}, complex geometry\,\cite{YJH10, YJH11, YJH12, YJH13, YJH16}, representation theory\,\cite{BS, YJH15, YJH17} and
mathematical physics\,\cite{B, M}.
\par In this paper, we study the adjoint orbits of $G^J$, and in particular, we calculate
the adjoint nilpotent orbits of $G^J$ explicitly.
We show that unlike the case of a reductive Lie group,
there are uncountably many nilpotent $G^J$-orbits.
\par This paper is organized as follows. In Section 2,  we review the Kostant-Sekiguchi correspondence for a reductive real Lie group and adjoint orbits of $SL(2,\BR)$. In Section 3, we study the adjoint orbits of $G^J$ in the Lie
algebra $\mathfrak{g}^J$. We describe the set of nilpotent orbits of $G^J$ and the set of
nilpotent orbits of $K_\mathbb{C}^J$ in $\mathfrak{p}_\mathbb{C}^J$ explicitly. Here $K_\mathbb{C}^J$ is the complexification of $K^J$ and $\mathfrak{g}_\mathbb{C}^J=\mathfrak{k}_\mathbb{C}^J+\mathfrak{p}_\mathbb{C}^J$ is the decomposition of the complexification $\mathfrak{g}_\mathbb{C}^J$ of $\mathfrak{g}^J$.
\vskip 0.5cm\noindent
{\bf Notations:}
We denote by $\BZ,\,\BR$ and $\BC$ the ring of integers, the field
of real numbers, and the field of complex numbers respectively. We
denote by $\BR^{\times}$ and $\BC^{\times}$ the set of nonzero
real numbers and the set of nonzero complex numbers respectively.
We denote by $\BZ^+$\,(resp.\,$\BZ_{\geq 0}$) the set of all
positive\,(resp. nonnegative) integers, by $F^{(k,l)}$ the set of
all $k\times l$ matrices with entries in a commutative ring $F$.
For any $M\in F^{(k,l)},\ ^t\!M$ denotes the transpose matrix of
$M$. We denote the identity matrix of degree $n$ by $I_n$.

\end{section}

\vskip 1.5cm

\section{{\large\bf The Kostant-Sekiguchi Correspondence}}
\setcounter{equation}{0}
\vskip 2mm
In this section, we review the Kostant-Sekiguchi correspondence for a reductive real Lie group and adjoint orbits of $SL(2,\BR)$\,(cf.\,\cite{K, L, S-V, Se, Ve, Vo}).
Let $G$ be a real reductive group with Lie algebra $\mathfrak{g}$, and let
$K$ be a maximal compact subgroup of $G$ with Lie algebra $\mathfrak{k}$. Let $\mathfrak{g} = \mathfrak{k}\oplus \mathfrak{p}$ be a Cartan decomposition of $\mathfrak{g}$ with the assciated Cartan involution $\theta$. Let $\mathfrak{g}_\mathbb{C}=\mathfrak{k}_\mathbb{C}\oplus\mathfrak{p}_\mathbb{C}$ denote the complexification of $\mathfrak{g}$, and let $\sigma$ be the associated complex conjugation. Let $G_\mathbb{C}$ and $K_\mathbb{C}$ denote the complexifications of $G$ and $K$ with the Lie algebras $\mathfrak{g}_\mathbb{C}$ and $\mathfrak{k}_\mathbb{C}$, respectively. \\

J.\,Sekiguchi and B.\,Kostant established a bijection between the set of all nilpotent $G$-orbits in $\mathfrak{g}$ and the set of all nilpotent ${K_\mathbb{C}}$-orbit in $\mathfrak{p}_\mathbb{C}$.

\vskip 0.3cm\noindent
\begin{definition}Let $L$ denote $\mathfrak{g}$ or $\mathfrak{g}_\mathbb{C}$.
		\begin{enumerate}[itemsep=6pt]
	\item An ordered triple $\{Z_1,Z_2,Z_3\}$ of elements in $L$ is said to be an \textit{$\mathfrak{sl}_2$-triple} if
	\begin{equation}
	[Z_1,Z_2]=2Z_2\,,\qquad[Z_1,Z_3]=-2Z_3\,,\qquad[Z_2,Z_3]=Z_1\,.
	\end{equation}
	\item Two $\mathfrak{sl}_2$-triples $\{Z_1,Z_2,Z_3\}$ and $\{Z'_1,Z'_2,Z'_3\}$ in $L$ are said to be \textit{conjugate under a subgroup $W$} of $L$ if there exists an element $w \in W$ such that $Z_i=w\cdot
	Z'_i\,\big(=w\, Z'_i \,w^{-1}\big)$ for $i=1,2,3$.
	\end{enumerate}
\end{definition}
\vskip 0.3cm\noindent

To describe the Kostant-Sekiguchi correspondence, it is necessary to consider the following classes of $\mathfrak{sl}_2$-triples.
\vskip 0.3cm

\begin{definition}[Kostant-Sekiguchi triples] \hfill
	\begin{enumerate}[itemsep=6pt]
		\item An $\mathfrak{sl}_2$-triple $\{H,E,F\}$ in $\mathfrak{g}$ is said to be a \textit{KS-triple in $\mathfrak{g}$} if $\theta(E)=-F$.
		\item An $\mathfrak{sl}_2$-triple $\{x,e,f\}$ in $\mathfrak{g}_\mathbb{C}$ is said to be a \textit{normal} if $x \in \mathfrak{k}_\mathbb{C}$ and $e,f \in \mathfrak{p}_\mathbb{C}$.
		\item A normal $\mathfrak{sl}_2$-triple $\{x,e,f\}$ in $\mathfrak{g}_\mathbb{C}$ is said to be a \textit{KS-triple in $\mathfrak{g}_\mathbb{C}$} if $f=\sigma(e)$.
	\end{enumerate}
\end{definition}

\vskip 0.3cm\noindent
\begin{theorem}\label{2theorem01}
	Let $G$ be a real reductive group with Lie algebra $\mathfrak{g}$, and let
	$K$ be a maximal compact subgroup of $G$ with Lie algebra $\mathfrak{k}$. Let $\mathfrak{g}_\mathbb{C}=\mathfrak{k}_\mathbb{C}\oplus\mathfrak{p}_\mathbb{C}$ denote the complexification of $\mathfrak{g}$.  Let $K_\mathbb{C}$ denote the complexification of $K$ with the Lie algebra $\mathfrak{k}_\mathbb{C}$. The following sets (1)-(6) are in natural one-to-one correspondence:
\begin{enumerate}[itemsep=6pt]
	\item Nilpotent $G$-orbits in $\mathfrak{g}.$
	\item $G$-conjugacy classes of $\mathfrak{sl}_2$-triples in $\mathfrak{g}.$
	\item $K$-conjugacy classes of KS-triples in $\mathfrak{g}.$
	\item $K$-conjugacy classes of KS-triples in $\mathfrak{g}_\mathbb{C}.$
	\item $K_\mathbb{C}$-conjugacy classes of normal $\mathfrak{sl}_2$-triples in $\mathfrak{g}_\mathbb{C}.$
	\item Nilpotent $K_\mathbb{C}$-orbits in $\mathfrak{p}_\mathbb{C}$.
\end{enumerate}
The correspondence between (1) and (6) is the Kostant-Sekiguchi correspondence.
\end{theorem}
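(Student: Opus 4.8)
The plan is to realize the theorem as a chain of natural bijections $(1)\leftrightarrow(2)\leftrightarrow(3)\leftrightarrow(4)\leftrightarrow(5)\leftrightarrow(6)$, each link being an explicit map that intertwines the relevant group actions. For $(1)\leftrightarrow(2)$ I would invoke the Jacobson--Morozov theorem: every nonzero nilpotent $E\in\mathfrak{g}$ can be completed to an $\mathfrak{sl}_2$-triple $\{H,E,F\}$, so the assignment sending a triple to the orbit $G\cdot E$ of its nilpositive element is surjective onto the nonzero nilpotent orbits. Injectivity is Kostant's conjugacy theorem, which says two $\mathfrak{sl}_2$-triples with the same nilpositive element $E$ are conjugate by an element of the centralizer $Z_G(E)$; together with $G$-equivariance this identifies $G$-conjugacy classes of triples with $G$-orbits of their nilpositive elements (the zero orbit being added by convention).

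For $(2)\leftrightarrow(3)$ I would first show that each $G$-conjugacy class of $\mathfrak{sl}_2$-triples contains a KS-triple, i.e. one with $\theta(E)=-F$: conjugate the semisimple neutral element $H$ into $\mathfrak{p}$, and then use the $\mathfrak{sl}_2$-representation theory together with $\theta$ to normalize $E$ and $F$. One then checks that two KS-triples that are $G$-conjugate are already $K$-conjugate, using the compatibility of the centralizers with the Cartan decomposition. The link $(3)\leftrightarrow(4)$ is Sekiguchi's Cayley transform: to a KS-triple $\{H,E,F\}$ with $\theta E=-F$ I associate
\[
x=i(E-F),\qquad e=\tfrac12\big(H-i(E+F)\big),\qquad f=\tfrac12\big(H+i(E+F)\big).
\]
Since $\theta E=-F$ forces $E-F\in\mathfrak{k}$ and $H,\,E+F\in\mathfrak{p}$, one gets $x\in\mathfrak{k}_\mathbb{C}$ and $e,f\in\mathfrak{p}_\mathbb{C}$; a direct bracket computation verifies the $\mathfrak{sl}_2$-relations, and $\sigma(e)=f$ because $\sigma$ fixes $\mathfrak{g}$. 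As $K$ commutes with both $\theta$ and $\sigma$, this map is $K$-equivariant and invertible, giving the bijection.

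For $(4)\leftrightarrow(5)$ I would prove that every $K_\mathbb{C}$-conjugacy class of normal $\mathfrak{sl}_2$-triples contains a KS-triple (one with $f=\sigma(e)$), unique up to $K$-conjugacy; existence comes from an averaging argument over the compact group $K$, and the passage from $K_\mathbb{C}$- to $K$-conjugacy again uses the Cartan decomposition. Finally, $(5)\leftrightarrow(6)$ is the symmetric-pair analogue of $(1)\leftrightarrow(2)$: by the Kostant--Rallis theorem every nonzero nilpotent element of $\mathfrak{p}_\mathbb{C}$ is the nilpositive element $e$ of a normal triple, and any two normal triples with the same $e$ are conjugate under $Z_{K_\mathbb{C}}(e)$; hence the assignment of a normal triple to the orbit $K_\mathbb{C}\cdot e$ is a bijection onto the nilpotent $K_\mathbb{C}$-orbits in $\mathfrak{p}_\mathbb{C}$.

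I expect the main obstacle to be the two \emph{reduction of conjugacy} steps, $(2)\leftrightarrow(3)$ and $(4)\leftrightarrow(5)$: namely, showing that a conjugacy under the large group ($G$, resp.\ $K_\mathbb{C}$) between Cartan-adapted representatives can always be realized inside the small group ($K$). These rest on the Kostant--Rallis structure theory of the symmetric pair $(\mathfrak{g}_\mathbb{C},\mathfrak{k}_\mathbb{C})$ and on properties of the Cartan involution, which are the genuinely nontrivial inputs; by contrast the outer links are classical $\mathfrak{sl}_2$-theory and the middle link is the explicit Cayley transform computation above.
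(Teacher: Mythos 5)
The paper does not actually prove Theorem \ref{2theorem01}: it is quoted as known and the reader is referred to \cite{K, KR, Se, YJH8.5}, so there is no internal argument to compare against. Your outline is precisely the standard chain $(1)\leftrightarrow(2)\leftrightarrow\cdots\leftrightarrow(6)$ from those references (Jacobson--Morozov plus Kostant's conjugacy theorem for $(1)\leftrightarrow(2)$, the Cayley transform for $(3)\leftrightarrow(4)$, Kostant--Rallis for $(5)\leftrightarrow(6)$), and your Cayley-transform formulas do check out: with $\theta(E)=-F$ one has $E-F\in\mathfrak{k}$ and $H,\,E+F\in\mathfrak{p}$, the bracket relations $[x,e]=2e$, $[x,f]=-2f$, $[e,f]=x$ hold, and $\sigma(e)=f$; your triple differs from the one the paper exhibits in its $SL(2,\mathbb{R})$ example only by the admissible rescaling $(e,f)\mapsto(\lambda e,\lambda^{-1}f)$. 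The one place where your sketch is thinner than the actual literature proof is the existence half of $(4)\leftrightarrow(5)$: producing a representative with $f=\sigma(e)$ inside a given $K_\mathbb{C}$-class of normal triples is not obtained by averaging over $K$ (a fixed-point/averaging argument would not obviously preserve the $\mathfrak{sl}_2$-relations); Sekiguchi's argument instead uses the Cartan decomposition of the reductive centralizer of the triple and the interplay of $\sigma$ and $\theta$ on it, so that step should be attributed to the structure theory rather than to averaging.
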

We refer to \cite{K,KR,Se,YJH8.5} for more details on Theorem \ref{2theorem01}.
\vskip 0.5cm
\begin{remark} With the notations as in Theorem \ref{2theorem01}, M.Vergne \cite{Ve} proved that if $\mathcal{O}$ is a real nilpotent orbit in $\mathfrak{g}$, then there exists a canonical $K$-equivariant diffeomorphism of $\mathcal{O}$ onto the nilpotent $K_\mathbb{C}$-orbit in $\mathfrak{p}_\mathbb{C}$ associated to $\mathcal{O}$ via the Kostant-Sekiguchi correspondence. (cf.\;\cite{Vo} p.206)
\end{remark}
\vskip 0.5cm
\noindent
{\large \bf Example.}
We let $G=SL(2,\BR)$ and let $K=SO(2)$ be a maximal
compact subgroup of $G$. The Lie algebra $\fg$ of $G$ is given by

\begin{equation*}
\fg=\left\{ \begin{pmatrix} x & \ y \\ z & -x\end{pmatrix}\bigg| \ x,y,z\in
\BR\,\right\}.
\end{equation*}
We put
\begin{equation*}
X=\begin{pmatrix} 1 & \ 0 \\ 0 & -1
\end{pmatrix},\quad
Y=\begin{pmatrix} 0 & 1 \\ 1 & 0
\end{pmatrix},\quad
Z=\begin{pmatrix} \ 0 & 1 \\ -1 & 0
\end{pmatrix}.
\end{equation*}
Then the set $\left\{ X,Y,Z\right\}$ forms a basis for $\fg$. We
define an element $F(x,y,z)\in \fg $ by
\begin{equation}
F(x,y,z):=xX+yY+zZ=\begin{pmatrix} x & y+z \\ y-z & -x
\end{pmatrix}.
\end{equation}
Then we have the relations
\begin{equation}
X^2+Y^2-Z^2=3I_2,\quad [X,Y]=2Z,\quad
[X,Z]=2Y,\quad [Y,Z]=-2X.
\end{equation}
It is easy to see that $X$ and
$Y$ are hyperbolic elements and $Z$ is an elliptic element. For a
nonzero real number $\alpha$, the $G$-orbit of $\alpha X$ is represented
by the one-sheeted hyperboloid
\begin{equation}
x^2+y^2-z^2=\alpha^2. \label{3eq05}
\end{equation}
The $G$-orbit of $\al Y\,(\alpha\in \BR^{\times})$ is also represented by
the hyperboloid (\ref{3eq05}). The $G$-orbit of $\al Z\,(\al\in
\BR^{\times})$ is represented by two-sheeted hyperboloids
\begin{equation}
x^2+y^2-z^2=-\al^2.\label{3eq06}
\end{equation}
Since
$$F(x,y,z)^2=(x^2+y^2-z^2)\cdot I_2,$$ we have for any
$k\in\BZ^+,$ $$F(x,y,z)^{2k}=(x^2+y^2-z^2)^k\cdot I_2.$$
Thus we
see that $F(x,y,z)$ is nilpotent if and only if $x^2+y^2-z^2=0.$
Therefore the set ${\Cal N}_{\BR}$ of all nilpotent elements in
$\fg$ is given by

\begin{equation}
{\Cal N}_{\BR}=\left\{ F(x,y,z)=\begin{pmatrix} x &
y+z\\ y-z & -x\end{pmatrix} \bigg|\ x^2+y^2-z^2=0\right\}.
\end{equation}

We put
\begin{equation}
S={\frac 12}(Y+Z)=\begin{pmatrix} 0 & 1\\ 0 & 0\end{pmatrix},\quad
T= {\frac 12}(Y-Z)=\begin{pmatrix} 0 & 0\\ 1 & 0\end{pmatrix}.
\end{equation}
Obviously $S$ and $T$ are nilpotent elements in ${\Cal N}_{\BR}$
and they satisfy
\begin{equation}
[X,S]=2S\,,\qquad[X,T]=-2T\,,\qquad[S,T]=X\,\label{3eq08}
\end{equation}
and
\begin{equation}
\theta(X)=-X\,,\qquad\theta(S)=-T,\qquad\theta(T)=-S. \label{3eq09}
\end{equation}
Here $\theta$ is the Cartan involution defined by  $\theta(g)= -{}^tg$ for $g$ in $\mathfrak{g}$. \\
According to equation (\ref{3eq08}) and (\ref{3eq09}), $\{X,\,S,\,T\}$ and $\{-X,\,-S,\,-T\}$ are KS-triples in $\mathfrak{g}$.
\par The $G$-orbit of $\al S\,(\al\in \BR^{\times})$ is represented
by the cone
\begin{equation}
x^2+y^2-z^2=0,\quad (x,y,z)\neq (0,0,0)
\end{equation}
depending on the sign of $\al$.
\vskip 0.3cm
If $\al > 0,$ the $G$-orbit of $\al S$ is characterized by the one-sheeted
cone
\begin{equation}
x^2+y^2-z^2=0,\quad z>0.\label{3eq10}
\end{equation}

If $\al < 0,$ the
$G$-orbit of $\al S$ is characterized by the one-sheeted cone

\begin{equation}
x^2+y^2-z^2=0,\quad z<0.\label{3eq11}
\end{equation}
The $G$-orbits of $\al
T\,(\al>0)$ are characterized by the one-sheeted cone (\ref{3eq11}) and
the $G$-orbits of $\al T\,(\al<0)$ are characterized by the
one-sheeted cone (\ref{3eq10}).

\vskip 0.3cm
We define
the $G$-orbits ${\Cal N}_{\BR}^+$ and ${\Cal N}_{\BR}^-$ by
\begin{equation}
{\Cal N}_{\BR}^+=G\cdot S\quad \text{and}\quad {\Cal N}_{\BR}^-=G\cdot T.
\end{equation}
Then we obtain

\begin{equation}
{\Cal N}_{\BR}={\Cal N}_{\BR}^+\cup \left\{ 0\right\}\cup {\Cal N}_{\BR}^-.\label{3eq13}
\end{equation}
According to (\ref{3eq05}),\,(\ref{3eq06}) and (\ref{3eq13}), we
see that there are infinitely many hyperbolic orbits and elliptic
orbits, and on the other hand there are only three nilpotent
orbits in $\fg$.

\vskip 0.3cm
Let $$K_{\BC}=SO(2,\BC)=\left\{ \begin{pmatrix} \ a & b\\ -b & a\end{pmatrix}
\bigg|\ a^2+b^2=1,\ a,b\in \BC\,\right\}$$
be the complexification
of $K$. The complexification $\fg_{\BC}$ of $\fg$ has the Cartan
decomposition
$$\fgc=\fkc+\fpc,$$
where
$$\fkc=\left\{ \begin{pmatrix} \ 0
& z\\ -z & 0\end{pmatrix} \bigg|\ z\in \BC\,\right\}$$
and
$$\fpc=\left\{ \begin{pmatrix} x & \ y\\ y & -x\end{pmatrix} \bigg|\ x,y\in
\BC\,\right\}.$$
The set ${\Cal N}_{\theta}$ of all nilpotent
elements in $\fpc$ is given by
\begin{equation}
{\Cal N}_{\theta}=\left\{
\begin{pmatrix} x & \ y\\ y & -x\end{pmatrix}\in \fpc \;\bigg|\
x^2+y^2=0\,\right\}\subset \fpc.
\end{equation}
 We note that
$K_{\BC}$ acts on ${\Cal N}_{\theta}$.
\vskip 0.3cm
\par We put

\begin{equation}
H_{\theta}=\begin{pmatrix} \ 0 & -i \\ i & 0\end{pmatrix},\quad
X_{\theta}={\frac 12} \begin{pmatrix} -i & \ 1 \\ 1 & i\end{pmatrix},\quad
Y_{\theta}={\frac 12}\begin{pmatrix} \ i & 1 \\ 1 &-i\end{pmatrix}.
\end{equation}
Then they satisfy
\begin{equation}
H_{\theta}\in \fkc,\quad X_{\theta},\,Y_{\theta}\in \fpc \label{3eq18}
\end{equation}
,
\begin{equation}
[H_{\theta},Y_{\theta}]=2Y_{\theta},\quad
[H_{\theta},X_{\theta}]=-2X_{\theta},\quad
[Y_{\theta},X_{\theta}]=H_{\theta}.\label{3eq19}
\end{equation}
and
\begin{equation}
\s_0(H_{\theta})=-H_{\theta},\quad
\s_0(X_{\theta})=Y_{\theta},\quad
\s_0(Y_{\theta})=X_{\theta},\label{3eq20}
\end{equation}
where $\s_0$ denotes the
complex conjugation on $\fgc.$
\vskip 0.3cm
\par According to equation (\ref{3eq18}), (\ref{3eq19}) and (\ref{3eq20}), $\{H_{\theta},\,Y_{\theta},\,X_{\theta}\}$ and $\{-H_{\theta},\,-Y_{\theta},\,-X_{\theta}\}$ are KS-triples in $\mathfrak{g}_\mathbb{C}$.
Moreover, two KS-triples $\{X,\,S,\,T\}$ and $\left\{H_{\theta},\,Y_{\theta},\,X_{\theta}\right\}$ satisfy the following conditions:
\begin{equation}
H_{\theta}=i(S-T)\,,\quad Y_{\theta}=\frac{1}{2}(S+T+iX)\,,\quad X_{\theta}=\frac{1}{2}(S+T-iX).
\end{equation}
\par Now we define the
$K_{\BC}$-orbits ${\Cal N}_{\theta}^+$ and ${\Cal N}_{\theta}^-$
by
\begin{equation}
{\Cal N}_{\theta}^+=K_{\BC}\cdot X_{\theta}\quad
\text{and}\quad {\Cal N}_{\theta}^-=K_{\BC}\cdot
Y_{\theta}.
\end{equation}
Then we see that
\begin{equation}
{\Cal N}_{\theta}={\Cal N}_{\theta}^+\cup \left\{ 0\right\}\cup {\Cal N}_{\theta}^-.
\end{equation}

The $K_{\BC}$-orbit ${\Cal N}_{\theta}^+$
is characterized by the straight line

\begin{equation}
y=ix,\quad x\in\BC-\left\{ 0\right\}.
\end{equation}

On the other hand, the $K_{\BC}$-orbit ${\Cal
N}_{\theta}^-$ is characterized by the straight line
\begin{equation}
y=-ix,\quad x\in\BC-\left\{ 0\right\}.
\end{equation}

It is easily seen that the
$K_{\BC}$-orbits of $\al H_{\theta}\,(\al \in\BC^{\times})$ are
represented by complex hyperboloids and that there are infinitely
many hyperbolic and elliptic orbits in $\fgc.$ However there are
only three nilpotent orbits in $\fpc$ which are
${\Cal N}_{\theta}^+,\ \left\{ 0\right\}$ and ${\Cal N}_{\theta}^-.$

\vskip 0.3cm
The Kostant-Sekiguchi correspondence between the
$G$-nilpotent orbits in ${\Cal N}_{\BR}$ and the
$K_{\BC}$-nilpotent orbits in ${\Cal N}_{\theta}$ is given by
\begin{equation}
{\Cal N}_{\BR}^+\mapsto {\Cal N}_{\theta}^-,\quad \left\{
0\right\}\mapsto \left\{ 0\right\}, \quad {\Cal N}_{\BR}^-\mapsto
{\Cal N}_{\theta}^+.
\end{equation}


\vskip 1.5cm

\begin{section}{{\large\bf  Adjoint Orbits of the Jacobi Group}}
\setcounter{equation}{0}
In this section, we compute the adjoint orbits for the Jacobi group.
We observe that the Jacobi group $G^J$ is embedded in the symplectic group
$Sp(4,\BR)$ via
\begin{equation}
(M,(\la,\mu,\kappa))\mapsto \begin{pmatrix} a & 0 & b &
a \mu-b\la
\\ \la & 1 & \mu & \kappa
\\ c & 0 & d & c\mu-d\la \\ 0 & 0 & 0 & 1 \end{pmatrix},\label{4eq06}
\end{equation}
where $M=\begin{pmatrix} a & b\\ c & d\end{pmatrix}\in SL(2,\BR).$ The Lie
algebra $\fg^J$ of $G^J$ is given by
\begin{equation}
\fg^J=\left\{ (X,(p,q,r))\,|\ X\in \fg,\ p,q,r\in
\BR\,\right\}
\end{equation}
with the bracket
\begin{equation}
[(X_1,(p_1,q_1,r_1)),\,(X_2,(p_2,q_2,r_2))]=({\tilde X},({\tilde
p},{\tilde q},{\tilde r})),
\end{equation}
where
$$X_1=\begin{pmatrix} x_1 & \
y_1\\ z_1 & -x_1\end{pmatrix},\quad X_2=\begin{pmatrix} x_2 & \ y_2\\ z_2 &
-x_2\end{pmatrix}\in \mathfrak{sl}(2,\mathbb{R})$$
and
\begin{eqnarray*}
{\tilde X}&=& X_1X_2-X_2X_1,\\
{\tilde p}&=& p_1x_2+q_1z_2-p_2x_1-q_2z_1,\\
{\tilde q}&=& q_2x_1+p_1y_2-q_1x_2-p_2y_1,\\
{\tilde r}&=& 2(p_1q_2-p_2q_1).
\end{eqnarray*}
Indeed, an element $(X,(p,q,r))$ in $\fg^J$ with $X=\begin{pmatrix} x &
y+z\\ y-z & -x\end{pmatrix} \in \mathfrak{sl}(2,\mathbb{R})$ may be identified with the
matrix
\begin{equation}
G(x,y,z,p,q,r):=\begin{pmatrix} x & 0 & y+z & q \\ p & 0 & q & r
\\ y-z & 0 & -x & -p \\ 0 & 0 & 0 & 0 \end{pmatrix}\label{4eq09}
\end{equation}
in the Lie algebra $\frak{sp}(4,\BR)$ of $Sp(4,\BR).$

\vskip 0.3cm\noindent
\begin{lemma}\label{4lem01}
If $G(x,y,z,p,q,r)$ is an element in $\fg^J$
given by (\ref{4eq09}), then for a positive integer $k\in \BZ^+$,
\begin{equation}
G(x,y,z,p,q,r)^{2k}=(x^2+y^2-z^2)^{k-1}G(x,y,z,p,q,r)^2.\label{4eq10}
\end{equation}
\end{lemma}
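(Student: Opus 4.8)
The plan is to reduce the whole family of identities (\ref{4eq10}) to the single relation $G^4=\lambda\,G^2$, where I abbreviate $\lambda:=x^2+y^2-z^2$ and $G:=G(x,y,z,p,q,r)$. Note that $\lambda$ is exactly the square of the underlying block $X=\begin{pmatrix} x & y+z\\ y-z & -x\end{pmatrix}$, for which $X^2=\lambda I_2$ as in the $SL(2,\mathbb R)$ example of Section 2. Once $G^4=\lambda\,G^2$ is established, (\ref{4eq10}) follows by induction on $k$: the cases $k=1$ and $k=2$ are the trivial identity and $G^4=\lambda G^2$ respectively, and the inductive step reads $G^{2(k+1)}=G^{2k}\,G^2=\lambda^{k-1}G^2\cdot G^2=\lambda^{k-1}(\lambda G^2)=\lambda^{k}G^2$. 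Thus the real content of the lemma is that $G^2$ is, up to the scalar $\lambda$, idempotent, i.e.\ $(G^2)^2=\lambda\,G^2$.

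To prove this I would first compute $G^2$, exploiting the feature that the second column and the fourth row of $G$ both vanish. Consequently, in any product $GA$ or $AG$ the only intermediate indices that contribute are the first and the third, and the product again has vanishing second column and fourth row; this reduces every multiplication to a small bookkeeping in the remaining pattern of indices. Carrying it out, and writing $u:=xq-(y+z)p$ and $v:=xp+(y-z)q$, one finds
\begin{equation*}
G^2=\begin{pmatrix} \lambda & 0 & 0 & u \\ v & 0 & -u & 0 \\ 0 & 0 & \lambda & v \\ 0 & 0 & 0 & 0 \end{pmatrix}.
\end{equation*}
The structural point I would check carefully is that the $(1,1)$- and $(3,3)$-entries are both $\lambda$, while the remaining nonzero entries occur in the matched pairs $\pm u$ and $v,v$.

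Finally I would square this matrix. The same vanishing-column/row reduction applies to $G^2$ itself, so squaring only pairs the $\lambda$'s on the diagonal against the off-diagonal entries, while the mixed $u,v$ contributions cancel in conjugate positions (for instance the $(2,4)$-entry is $v\,u+(-u)\,v=0$). The outcome is $(G^2)^2=\lambda\,G^2$, which closes the induction. The main obstacle is simply the accurate arithmetic of the two matrix products; the only genuine ideas required are the reduction to the case $k=2$ by induction and the observation that the vanishing second column and fourth row trivialize both multiplications and force the clean collapse to $\lambda\,G^2$.
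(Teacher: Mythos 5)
Your proof is correct and takes essentially the same route as the paper: both reduce the general identity to the single relation $G^4=(x^2+y^2-z^2)\,G^2$ (which the paper obtains by the Cayley--Hamilton theorem or direct computation) and then conclude by induction on $k$. Your explicit formula for $G^2$ and the cancellation $(G^2)^2=\lambda\,G^2$ check out.
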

\begin{proof}
	By the Cayley-Hamilton theorem or a direct computation, we obtain
	\begin{equation}
	G(x,y,z,p,q,r)^4=(x^2+y^2-z^2)G(x,y,z,p,q,r)^2.\label{4eq11}
	\end{equation}
	The formula
	(\ref{4eq10}) follows immediately from (\ref{4eq11}).
\end{proof}

\vskip 0.5cm
According to Lemma \ref{4lem01}, the set ${\Cal N}_{\BR}^J$ of all
nilpotent elements in $\fg^J$ is given by
\begin{equation}\label{4eq07}
{\Cal N}_{\BR}^J=\left\{\,G(x,y,z,p,q,r)\in \fgj\vert\
x^2+y^2-z^2=0\,\right\}.
\end{equation}
We have the adjoint action of
$G^J$ on $\fg^J$ given by
\begin{equation}
g\cdot X=Ad(g)X=gXg^{-1},\quad g\in G^J,\ X\in {\Cal N}_{\BR}^J.
\end{equation}
According to (\ref{4eq06}), we may
write $g=(M,(\la,\mu,\kappa))\in G^J$ with $M=\begin{pmatrix} a
&b\\c&d\end{pmatrix} \in SL(2,\mathbb{R})$ as
\begin{equation}
g=\begin{pmatrix} a & 0 & b & a\mu-b\la
\\ \la & 1 & \mu & \kappa
\\ c & 0 & d & c\mu-d\la \\ 0 & 0 & 0 & 1 \end{pmatrix}.
\end{equation}
Then the inverse of $g$ is given by
\begin{equation}
g^{-1}=\begin{pmatrix} d & 0 & -b &
-\mu
\\ c\mu-d\la & 1 & b\la-a\mu & -\kappa
\\ -c & 0 & a & \la \\ 0 & 0 & 0 & 1 \end{pmatrix}.
\end{equation}

\vskip 0.5cm
\begin{lemma}\label{4lem02}
If $g=\left( \begin{pmatrix} a & b
\\c&d\end{pmatrix}, (\la,\mu,\kappa)\right)$ is an element of $G^J$, then the
action of $g$ on $G(x,y,z,p,q,r)$ is given by

\begin{equation}
g\cdot
G(x,y,z,p,q,r)=G({\tilde x},{\tilde y},{\tilde z},{\tilde
p},{\tilde q},{\tilde r}),
\end{equation}
where
\begin{eqnarray*}
{\tilde x}&=&(ad+bc)x-ac(y+z)+bd(y-z),\\
{\tilde y}+{\tilde z}&=& -2abx+a^2(y+z)-b^2(y-z),\\
{\tilde y}-{\tilde z}&=& 2cdx-c^2(y+z)+d^2(y-z),\\
{\tilde p}&=&d\left\{\la x+\mu(y-z)+p\right\}+c\left\{\mu x-\la (y+z)-q\right\},\\
{\tilde q}&=&-b\left\{\la x+\mu(y-z)+p\right\}-a\left\{\mu x-\la (y+z)-q\right\},\\
{\tilde r}&=&-2\la\mu x +\la^2(y+z)-\mu^2(y-z)-2p\mu+2q\la+r.
\end{eqnarray*}
In particular,
${\Cal N}_{\BR}^J$ is stable under the action of $G^J$.
\end{lemma}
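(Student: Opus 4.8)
The plan is to compute the conjugation $g\cdot G(x,y,z,p,q,r) = g\,G(x,y,z,p,q,r)\,g^{-1}$ directly as a matrix product in $\mathfrak{sp}(4,\BR)$, using the explicit $4\times 4$ forms of $g$, $G(x,y,z,p,q,r)$, and $g^{-1}$ already recorded in the excerpt, and then to read off the six entries $(\tilde x,\tilde y,\tilde z,\tilde p,\tilde q,\tilde r)$ by comparing with the template $G(\tilde x,\tilde y,\tilde z,\tilde p,\tilde q,\tilde r)$ of \eqref{4eq09}. Since $\mathrm{Ad}(g)$ is a Lie algebra automorphism and $\fg^J$ is the image of the embedding \eqref{4eq06}, the product $g\,G\,g^{-1}$ automatically lands back in $\fg^J$, so it necessarily has the shape $G(\tilde x,\tilde y,\tilde z,\tilde p,\tilde q,\tilde r)$; this guarantees that the comparison of entries is consistent and is exactly what yields the final ``stable under the action'' assertion.

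Concretely, I would carry out the multiplication in two stages. First I would form the product $g\,G(x,y,z,p,q,r)$, which only involves the linear $SL(2,\BR)$-block $M=\begin{pmatrix} a & b\\ c & d\end{pmatrix}$ and the Heisenberg parameters $(\la,\mu,\kappa)$ acting on the columns of $G$; then I would right-multiply by $g^{-1}$. It is cleaner to organize the computation by separating the ``$SL(2)$ part'' from the ``Heisenberg part'': the transformation of $(x,y,z)$ should depend only on $a,b,c,d$ (it is the adjoint action of $M$ on $\mathfrak{sl}(2,\BR)$ expressed in the basis $X,Y,Z$ via $F(x,y,z)$), while the parameters $\la,\mu,\kappa$ enter only into $(\tilde p,\tilde q,\tilde r)$. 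This observation lets me verify the first three formulas for $(\tilde x,\tilde y\pm\tilde z)$ by the standard $SL(2)$ computation $M\,F(x,y,z)\,M^{-1}$ using $\det M = 1$, and then handle $(\tilde p,\tilde q,\tilde r)$ by tracking the remaining terms, which are affine in $(p,q,r)$ with coefficients built from $\la,\mu,a,b,c,d$.

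The main obstacle is purely bookkeeping: the inhomogeneous terms in $\tilde p,\tilde q,\tilde r$ arise from the interaction of the fourth column/row of $g$ (the entries $a\mu-b\la$, $\kappa$, $c\mu-d\la$) with both $G$ and $g^{-1}$, and it is easy to drop a sign or mis-assign a term between $\tilde p$ and $\tilde q$, or to forget that $\tilde r$ picks up the genuinely quadratic contributions $-2\la\mu x + \la^2(y+z) - \mu^2(y-z)$ coming from the Heisenberg cocycle. I would guard against errors by checking the formulas against two sanity cases: taking $(\la,\mu,\kappa)=(0,0,0)$ must reduce $\tilde p,\tilde q$ to the linear $M$-action $(p,q)\mapsto (dp - cq,\,-bp+aq)$ on the ``$(p,q)$-vector'' and leave $\tilde r = r$ with the $(x,y,z)$-block unchanged except by $\mathrm{Ad}(M)$; and taking $M = I_2$ must reduce the action to that of the Heisenberg factor alone. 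Once the six entries match the claimed formulas, the stability statement follows immediately since each $\tilde x,\ldots,\tilde r$ is again real, so $G(\tilde x,\tilde y,\tilde z,\tilde p,\tilde q,\tilde r)\in\fg^J$, and when $x^2+y^2-z^2=0$ one checks from the first three formulas that $\tilde x^2+\tilde y^2-\tilde z^2=0$ as well (this is just $\det$ of the $M$-action on the $\mathfrak{sl}(2)$-block), so $\Cal N_{\BR}^J$ is preserved.
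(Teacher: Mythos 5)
Your proposal is correct and follows essentially the same route as the paper: the paper likewise verifies the six formulas by direct computation of the conjugation $g\,G(x,y,z,p,q,r)\,g^{-1}$ in $\mathfrak{sp}(4,\BR)$ and then deduces stability of ${\Cal N}_{\BR}^J$ from the identity ${\tilde x}^2+{\tilde y}^2-{\tilde z}^2=(ad-bc)^2(x^2+y^2-z^2)$. Your additional sanity checks (setting $(\la,\mu,\kappa)=(0,0,0)$ or $M=I_2$) are sensible safeguards but not part of the paper's argument.
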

\begin{proof}
The proof of the first part follows from a direct
computation. Let $G(x,y,z,p,q,r)$ be an element of ${\Cal
	N}_{\BR}^J$. Since
$${\tilde x}^2+{\tilde y}^2-{\tilde
	z}^2=(ad-bc)^2(x^2+y^2-z^2)=0,$$
we see that $g\cdot {\Cal
	N}_{\BR}^J\subset {\Cal N}_{\BR}^J$ for all $g\in G^J.$
\end{proof}
\vskip 0.3cm
We set
\begin{eqnarray*}
X^J&=&G(1,0,0,0,0,0),\\
Y^J&=&G(0,1,0,0,0,0),\\
Z^J&=&G(0,0,1,0,0,0),\\
P^J&=&G(0,0,0,1,0,0),\\
Q^J&=&G(0,0,0,0,1,0),\\
R^J&=&G(0,0,0,0,0,1).
\end{eqnarray*}

\vskip 0.3cm\noindent
Obviously the set
$\left\{\,X^J,\,Y^J,\,Z^J,\,P^J,\,Q^J,\,R^J\,\right\}$ forms a
basis for $\fgj$. So we have
$$G(x,y,z,p,q,r)=xX^J+yY^J+zZ^J+pP^J+qQ^J+rR^J.$$
We note that
$X^J$ and $Y^J$ are hyperbolic elements, $Z^J$ is an elliptic
element and $P^J,\,Q^J,\,R^J$ are nilpotent elements.

\vskip 0.3cm
Let $\Pi(G(x,y,z,p,q,r))$ denote the $G^J$-orbit of $G(x,y,z,p,q,r)$. According to Lemma \ref{4lem02}, we can get the following lemma.
\begin{lemma}\label{4lem03}
	Let $\al\in\BR$ be a fixed nonzero real number. Then
	\begin{eqnarray}
		\Pi(\alpha X^J)&=&\Pi(\alpha Y^J)\;=\;\left\lbrace G(x,y,z,p,q,r) \in {\mathfrak g}^J \;\Bigg|\; \begin{array}{c}
			x^2+y^2-z^2=\alpha^2 \\
			f(x,\,y,\,z,\,p,\,q)=\alpha^2 r
		\end{array}  \right\rbrace , \\
		\Pi(\alpha Z^J)&=&\left\lbrace G(x,y,z,p,q,r) \in {\mathfrak g}^J \;\Bigg|\; \begin{array}{c}
			x^2+y^2-z^2=-\alpha^2 \\
			f(x,\,y,\,z,\,p,\,q)=-\alpha^2 r
		\end{array}  \right\rbrace , \\
		\Pi(\alpha P^J)&=&\Pi(\alpha Q^J)\;=\;\left\lbrace G(0,0,0,p,q,r) \in {\mathfrak g}^J \;\big|\;pq\neq0   \right\rbrace , \\
		\Pi(\alpha R^J)&=&\alpha R^J ,
	\end{eqnarray}
where
\begin{equation}
	f(x,\,y,\,z,\,p,\,q)=2pqx-p^2(y+z)+q^2(y-z).
\end{equation}
\end{lemma}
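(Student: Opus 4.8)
The strategy is to read off each orbit from the explicit transformation formulas in Lemma~\ref{4lem02}, treating the $SL(2,\BR)$-part and the Heisenberg part of $g=(M,(\la,\mu,\ka))$ separately. The key structural observation is that the six coordinates split into two blocks: the ``base'' coordinates $(x,y,z)$ transform exactly as in the $SL(2,\BR)$ example (they depend only on $M$, via the same one-sheeted/two-sheeted hyperboloid pictures), while the ``fiber'' coordinates $(p,q,r)$ pick up affine corrections involving $(\la,\mu)$. Thus for each generator I would first use the known $SL(2,\BR)$-orbit description to pin down the locus $x^2+y^2-z^2=\pm\alpha^2$ (respectively $=0$), and then analyze how the Heisenberg action fills out the remaining fiber directions.

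\textbf{Step 1: containment.} For each of $\alpha X^J,\alpha Y^J,\alpha Z^J,\alpha P^J,\alpha Q^J,\alpha R^J$ I would verify that every point of the claimed set is invariant. Here the natural invariants are the quadratic form $\Phi(G):=x^2+y^2-z^2$, which Lemma~\ref{4lem02} shows satisfies $\tilde\Phi=(ad-bc)^2\Phi=\Phi$, and a second invariant built to control the fiber. A direct check on the formulas for $\tilde p,\tilde q,\tilde r$ should show that the combination $f(x,y,z,p,q)-\Phi\cdot r$ (with $f(x,y,z,p,q)=2pqx-p^2(y+z)+q^2(y-z)$) is also $G^J$-invariant; this is exactly what forces the second defining equation $f(x,y,z,p,q)=\alpha^2 r$ for $\Pi(\alpha X^J)$ and $f=-\alpha^2 r$ for $\Pi(\alpha Z^J)$, and on the nilpotent cone ($\Phi=0$) it reduces to $f=0$, consistent with the $pq\neq0$, $x=y=z=0$ description of $\Pi(\alpha P^J)$. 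The invariance of $\alpha R^J$ is immediate since $R^J=G(0,0,0,0,0,1)$ is central-looking in the fiber and $\tilde r=r$ when $x=y=z=p=q=0$.

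\textbf{Step 2: transitivity.} For the reverse inclusion I would show $G^J$ acts transitively on each claimed set. Given an arbitrary $G(x,y,z,p,q,r)$ on the locus, first apply a suitable $M\in SL(2,\BR)$ to move $(x,y,z)$ to the base point of the corresponding $SL(2,\BR)$-orbit (e.g. $(\alpha,0,0)$ for $\alpha X^J$, $(0,0,\alpha)$ for $\alpha Z^J$), reducing to the case where the stabilizer contains the diagonal/rotation torus; then use the three Heisenberg parameters $(\la,\mu,\ka)$, which act by the affine formulas for $\tilde p,\tilde q,\tilde r$, to adjust $(p,q,r)$ to the base value. The parameter $\ka$ acts as a pure translation $\tilde r=r+(\text{base terms})$, so it can always normalize $r$ once the constraint is used; the genuinely new content is that $(\la,\mu)$ suffice to sweep out the allowed $(p,q)$. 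For $\Pi(\alpha P^J)$ the base coordinates are already zero and one checks that $(p,q)$ with $pq\neq0$ form a single $SL(2,\BR)$-orbit in the punctured $(p,q)$-plane under $(\tilde p,\tilde q)=(dp-cq,\,-bp+aq)$.

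\textbf{Main obstacle.} The delicate point will be Step~2 for the semisimple orbits $\Pi(\alpha X^J)$ and $\Pi(\alpha Z^J)$: after using $M$ to fix the base point, the residual stabilizer in $SL(2,\BR)$ is only one-dimensional, so I must check that the two-parameter Heisenberg action (together with this leftover torus) really reaches \emph{every} fiber point satisfying the single scalar constraint $f=\pm\alpha^2 r$, and no more. Concretely this amounts to a rank computation showing the affine map $(\la,\mu)\mapsto(\tilde p,\tilde q)$ is surjective onto the appropriate affine line/plane once the invariant is fixed, and that the dimension count $\dim\Pi=\dim G^J-\dim(\text{stabilizer})$ matches the codimension-two locus cut out by the two equations. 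The merging $\Pi(\alpha X^J)=\Pi(\alpha Y^J)$ and $\Pi(\alpha P^J)=\Pi(\alpha Q^J)$ follows because $X^J,Y^J$ (resp.\ $P^J,Q^J$) already lie in a common $SL(2,\BR)$-orbit at the base level, as recorded in the $SL(2,\BR)$ example.
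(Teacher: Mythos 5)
Your two-step scheme (invariants, then transitivity) is a genuinely different and more structured route than the paper's proof, which simply lists the parametrized images $g\cdot(\al X^J)$, $g\cdot(\al Z^J)$, etc., obtained from Lemma \ref{4lem02} and leaves the identification with the equational descriptions to the reader. Your key observation is correct: a direct computation from Lemma \ref{4lem02} gives ${\tilde f}=f+\Phi({\tilde r}-r)$ for the Heisenberg part and ${\tilde f}=f$, ${\tilde r}=r$ for the $SL(2,\BR)$ part, so $f-\Phi r$ (with $\Phi=x^2+y^2-z^2$) really is a $G^J$-invariant, and this explains the relations $f=\pm\al^2 r$ cleanly. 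One small correction: $\ka$ does not enter the adjoint action at all --- $(I_2,(0,0,\ka))$ is central in $G^J$ --- so it cannot be used to ``normalize $r$''; it is the $-2p\mu+2q\la$ terms in ${\tilde r}$ that do the work, and on the loci with $\Phi\neq0$ the value of $r$ is forced anyway once $(p,q)$ is reached.

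The genuine gap is in Step 2 for $\Pi(\al Z^J)$, and it cannot be repaired because the displayed equality is itself too generous. Since the $(x,y,z)$-block transforms by $SL(2,\BR)$-conjugation alone, independently of $(\la,\mu,\ka)$, the projection of $\Pi(\al Z^J)$ to $(x,y,z)$-space is exactly the $SL(2,\BR)$-orbit of $\al Z$, which is a single sheet ($z/\al>0$) of the two-sheeted hyperboloid $x^2+y^2-z^2=-\al^2$; the elements $\al Z$ and $-\al Z$ are not conjugate in $SL(2,\BR)$. Hence your plan to move an arbitrary point of the locus to the base point $(0,0,\al)$ fails on the sheet $z/\al<0$, and the set $\left\{\Phi=-\al^2,\ f=-\al^2 r\right\}$ is in fact the disjoint union of the two distinct orbits $\Pi(\al Z^J)$ and $\Pi(-\al Z^J)$. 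A correct statement requires the additional condition $z/\al>0$, exactly as in Lemma \ref{4lem07} for $\Pi(\al S^J)$; note that the parametrization coming from Lemma \ref{4lem02} forces the $(y+z)$-coordinate to equal $\al(a^2+b^2)$ and the $(y-z)$-coordinate to equal $-\al(c^2+d^2)$, hence $z/\al>0$. A second, smaller mismatch: your (correct) computation shows that $\Pi(\al P^J)$ consists of all $G(0,0,0,p,q,r)$ with $(p,q)\neq(0,0)$ and $r\in\BR$ arbitrary --- the punctured $(p,q)$-plane --- whereas the stated condition $pq\neq0$ excludes, among others, $P^J$ itself. A careful execution of your own method would have exposed both discrepancies; they should be flagged rather than the displayed equalities asserted.
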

\begin{proof}By Lemma \ref{4lem02},
	\begin{align*}
	\Pi(\alpha X^J)&=\left\lbrace \alpha G(x,y,z,p,q,r) \in {\mathfrak g}^J \;\Bigg|\; \begin{array}{c}
	x=ad+bc,\; y+z=-2ab,\; y-z=2cd,\\
	p=d {\lambda} + c \mu,\; q= -b {\lambda} - a \mu ,\; r= -2 \, {\lambda} \mu, \\
	ad-bc=1,\; a,b,c,d,\la,\mu \in \mathbb{R}
	\end{array}  \right\rbrace , \\
	\Pi(\alpha Y^J)&=\left\lbrace \alpha G(x,y,z,p,q,r) \in {\mathfrak g}^J \;\Bigg|\; \begin{array}{c}
	x=-ac+bd,\; y+z=a^2-b^2,\; y-z=-c^2+d^2,\\
	p=-c {\lambda} + d \mu,\; q= a {\lambda} -b \mu ,\; r= {\lambda}^2-\mu^2, \\
	ad-bc=1,\; a,b,c,d,\la,\mu \in \mathbb{R}
\end{array}  \right\rbrace , \\
	\Pi(\alpha Z^J)&=\left\lbrace \alpha G(x,y,z,p,q,r) \in {\mathfrak g}^J \;\Bigg|\; \begin{array}{c}
	x=-ac-bd,\; y+z=a^2+b^2,\; y-z=-c^2-d^2,\\
	p=-c {\lambda} - d \mu,\; q= a {\lambda} +b \mu ,\; r= {\lambda}^2+\mu^2, \\
	ad-bc=1,\; a,b,c,d,\la,\mu \in \mathbb{R}
\end{array}  \right\rbrace , \\
	\Pi(\alpha P^J)&=\left\lbrace \alpha G(0,0,0,p,q,r) \in {\mathfrak g}^J \;\big|\;
	p=d,\;q=-b,\; r=-2\mu, \;b,d,\mu \in \mathbb{R} \right\rbrace , \\
	\Pi(\alpha Q^J)&=\left\lbrace \alpha G(0,0,0,p,q,r) \in {\mathfrak g}^J \;\big|\;
p=-c,\;q=a,\; r=2{\lambda}, \;a,c,\lambda \in \mathbb{R} \right\rbrace , \\
	\Pi(\alpha R^J)&=\alpha R^J . \qedhere
	\end{align*}
\end{proof}
We define the nilpotent
elements $S^J$ and $T^J$ by
\begin{equation}
S^J={\frac 12}(Y^J+Z^J)\quad\text{and}\quad T^J={\frac 12}(Y^J-Z^J).
\end{equation}

\begin{lemma}\label{4lem06} 	Let $\al \in\BR$ be a fixed nonzero real number.
Then $\alpha G(0,1,1,p,q,r)$ and \\ $\alpha G(0,1,1,p,0,r- \frac{1}{2}q^2)$ lie in the same $G^J$-orbit in ${\mathfrak g}^J$.
\end{lemma}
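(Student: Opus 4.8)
The plan is to produce an explicit $g \in G^J$ conjugating $\alpha G(0,1,1,p,q,r)$ to $\alpha G(0,1,1,p,0,r-\frac{1}{2}q^2)$, reading off its existence from the transformation rules of Lemma~\ref{4lem02}. Because the adjoint action $X \mapsto gXg^{-1}$ is linear in $X$ and therefore commutes with the scalar $\alpha$, it is enough to treat the case $\alpha=1$; equivalently, one notes that all six output coordinates $\tilde x,\tilde y,\tilde z,\tilde p,\tilde q,\tilde r$ in Lemma~\ref{4lem02} are homogeneous of degree one in $(x,y,z,p,q,r)$, so a single $g$ will work for every $\alpha$.

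The guiding observation is that the source and target share the same $\mathfrak{sl}_2$-part $(x,y,z)=(0,1,1)$ and differ only in the Heisenberg coordinates. This suggests searching for $g$ inside the Heisenberg factor, i.e. $g=(I_2,(\lambda,\mu,\kappa))$. First I would substitute $a=d=1$, $b=c=0$ together with $(x,y,z)=(0,1,1)$, so that $y+z=2$ and $y-z=0$, into the formulas of Lemma~\ref{4lem02}. The first three formulas then show that the $\mathfrak{sl}_2$-part is automatically fixed ($\tilde x=0$, $\tilde y+\tilde z=2$, $\tilde y-\tilde z=0$), and the formula for $\tilde p$ collapses to $\tilde p=p$ because its surviving terms involve only $x$ and $y-z$, both of which vanish.

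It then remains to control $\tilde q$ and $\tilde r$. Under the same substitution one gets $\tilde q = 2\lambda+q$, so the choice $\lambda=-q/2$ forces $\tilde q=0$. Taking also $\mu=0$ (the parameter $\kappa$ never enters the formulas), the expression for $\tilde r$ reduces to $\tilde r = 2\lambda^2+2q\lambda+r$, and substituting $\lambda=-q/2$ yields precisely $\tilde r = r-\frac{1}{2}q^2$. Hence $g=(I_2,(-q/2,\,0,\,0))$ realizes the claimed conjugacy, and multiplying through by $\alpha$ gives the statement as written.

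There is no deep obstacle here; the argument is a targeted computation. The only points that require care are, first, recognizing that fixing the $\mathfrak{sl}_2$-part essentially pins $g$ down to the Heisenberg subgroup, so that the single free parameter $\lambda$ can simultaneously annihilate $\tilde q$ and inject the quadratic shift $-\frac{1}{2}q^2$ into $\tilde r$; and second, verifying that the cross-term $-2p\mu$ in $\tilde r$ causes no trouble, which is ensured by the choice $\mu=0$ (when $p=0$ any $\mu$ would serve equally well).
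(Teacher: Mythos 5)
Your proof is correct and takes essentially the same route as the paper: both exhibit the identical conjugating element $g=(I_2,(-q/2,\,0,\,0))$ from the Heisenberg factor and verify the claim via the transformation formulas of Lemma~\ref{4lem02}. The paper merely states the outcome of this computation, whereas you spell out the intermediate steps (including the reduction to $\alpha=1$ by linearity of the adjoint action), and your arithmetic checks out.
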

\begin{proof}
	If $g=\left( I_2, (-q/2,0,0)\right)$, then the action of $g$ on $G(0,1,1,p,q,r)$ is given by
	\begin{equation*}
	g\cdot G(0,1,1,p,q,r)=G\left( 0,1,1,p,0,r-\frac{1}{2}q^2\right) .\qedhere
	\end{equation*}
\end{proof}
\begin{lemma}\label{4lem07}
	Let $\al,\beta \in\BR$ be fixed nonzero real numbers. Then
	\begin{eqnarray}
	\Pi(\alpha S^J)&=&\left\lbrace G(x,y,z,p,q,r) \in {\mathfrak g}^J \;\Bigg|\; \begin{array}{c}
	x^2+y^2-z^2=0, \; z/\alpha>0, \\
	f(x,\,y,\,z,\,p,\,q)=0,  \\
	r\textsl{ is dependant on } x,y,z,p,q.
	\end{array}  \right\rbrace , \\
	\Pi(\alpha T^J)&=&\left\lbrace G(x,y,z,p,q,r) \in {\mathfrak g}^J \;\Bigg|\; \begin{array}{c}
	x^2+y^2-z^2=0, \; z/\alpha<0, \\
	f(x,\,y,\,z,\,p,\,q)=0,  \\
	r\textsl{ is dependant on } x,y,z,p,q.
	\end{array}  \right\rbrace, \\
	&=&\Pi(\alpha (-S^J)), \nonumber \\
	\hspace{-30pt}\Pi(\alpha(S^J+\beta P^J))&=&\left\lbrace G(x,y,z,p,q,r) \in {\mathfrak g}^J \;\Bigg|\; \begin{array}{c}
	x^2+y^2-z^2=0, \; z/\alpha>0, \\
	f(x,\,y,\,z,\,p,\,q)=-\alpha^3\beta^2
	\end{array}  \right\rbrace, \\
	&=&\Pi(\alpha|\beta|^{2/3}(S^J+P^J)) \nonumber
	\end{eqnarray}
	where
	\begin{equation*}
	f(x,\,y,\,z,\,p,\,q)=2pqx-p^2(y+z)+q^2(y-z).
	\end{equation*}
\end{lemma}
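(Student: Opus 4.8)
The plan is to handle all three orbits with the device already used for Lemma~\ref{4lem03}: express each base point as a $G(\cdots)$ and push it around by the explicit adjoint action of Lemma~\ref{4lem02}. Writing $\al S^J=G(0,\tfrac{\al}{2},\tfrac{\al}{2},0,0,0)$, $\al T^J=G(0,\tfrac{\al}{2},-\tfrac{\al}{2},0,0,0)$ and $\al(S^J+\be P^J)=G(0,\tfrac{\al}{2},\tfrac{\al}{2},\al\be,0,0)$, one computes for $g=(M,(\la,\mu,\ka))$ with $M=\left(\begin{smallmatrix}a&b\\ c&d\end{smallmatrix}\right)$ that $g\cdot\al S^J$ has coordinates $\tilde x=-ac\al,\ \tilde y+\tilde z=a^2\al,\ \tilde y-\tilde z=-c^2\al,\ \tilde p=-c\la\al,\ \tilde q=a\la\al,\ \tilde r=\la^2\al$; that $g\cdot\al T^J$ has $\tilde x=bd\al,\ \tilde y+\tilde z=-b^2\al,\ \tilde y-\tilde z=d^2\al,\ \tilde p=d\mu\al,\ \tilde q=-b\mu\al,\ \tilde r=-\mu^2\al$; and that $g\cdot\al(S^J+\be P^J)$ has the same $\tilde x,\tilde y,\tilde z$ as in the $\al S^J$ case together with $\tilde p=\al(d\be-c\la),\ \tilde q=\al(a\la-b\be),\ \tilde r=\la^2\al-2\al\be\mu$. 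Note $\ka$ never enters.

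The forward inclusions then rest on three observations. First, $\tilde x^2+\tilde y^2-\tilde z^2=(ad-bc)^2(x^2+y^2-z^2)=0$ as already recorded in Lemma~\ref{4lem02}. Second, $\tilde z/\al=\tfrac12(a^2+c^2)>0$ in the $S^J$ (and $S^J+\be P^J$) case while $\tilde z/\al=-\tfrac12(b^2+d^2)<0$ in the $T^J$ case, which is where the sign of $z/\al$ comes from. Third, whenever $\tilde x=-ac\al,\ \tilde y+\tilde z=a^2\al,\ \tilde y-\tilde z=-c^2\al$ one has the algebraic identity
\[
f(\tilde x,\tilde y,\tilde z,\tilde p,\tilde q)=-\al\,(a\tilde p+c\tilde q)^2 ,
\]
and a one-line computation gives $a\tilde p+c\tilde q=(ad-bc)\al\be=\al\be$, so $f=-\al^3\be^2$; the case $\be=0$ recovers $f=0$, and the analogous identity $f=\al(b\tilde p+d\tilde q)^2$ with $b\tilde p+d\tilde q=0$ gives $f=0$ for $T^J$. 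Finally the dichotomy for $r$ is visible in the formulas for $\tilde r$: when $\be=0$ there is no free $\mu$, so $\tilde r=\la^2\al$ is pinned once $\tilde p,\tilde q$ are fixed (``$r$ dependent''), whereas for $\be\neq0$ the term $-2\al\be\mu$ lets $\tilde r$ sweep all of $\BR$ as $\mu$ varies, so $r$ is unconstrained.

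For the reverse inclusions I would, given $G(x,y,z,p,q,r)$ satisfying the stated conditions, recover the group parameters in order. Because $z/\al>0$ and $x^2=z^2-y^2$, the numbers $a^2:=(y+z)/\al$ and $c^2:=(z-y)/\al$ are nonnegative with $a^2+c^2=2z/\al>0$, so real $a,c$ exist; their relative sign is fixed by $-ac\al=x$, and the remaining global sign of $(a,c)$ is chosen to arrange $ap+cq=+\al\be$, which is possible precisely because the hypothesis $f=-\al^3\be^2$ forces $(ap+cq)^2=\al^2\be^2$ through the identity above (the flip is effective since $a^2+c^2>0$). With $a,c$ fixed, the two equations for $\tilde p,\tilde q$ together with $ad-bc=1$ form a linear system in $b,d,\la$ whose consistency condition is exactly $ap+cq=\al\be$; one then reads $\mu$ off the $\tilde r$-equation, uniquely forcing $r$ when $\be=0$ and realizing every $r\in\BR$ when $\be\neq0$. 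The degenerate cases $a=0$ or $c=0$ (one of $y\pm z$ vanishing) are dispatched by the same system with the roles of the two equations swapped.

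The two claimed coincidences of orbits are then immediate from the characterizations: sending $\al\mapsto-\al$ turns the region $z/\al>0$ into $z/\al<0$ and keeps $f=0$, so $\Pi(\al T^J)=\Pi(\al(-S^J))$; and since $-\al^3\be^2=-(\al|\be|^{2/3})^3$ while $\al$ and $\al|\be|^{2/3}$ share the same sign, the defining conditions for $\Pi(\al(S^J+\be P^J))$ and for $\Pi(\al|\be|^{2/3}(S^J+P^J))$ coincide. I expect the only genuine obstacle to be the sign bookkeeping in the reverse step---verifying that the single global sign freedom in $(a,c)$ is both available (via $a^2+c^2=2z/\al>0$) and exactly sufficient to convert the hypothesis $f=-\al^3\be^2$ into the solvability condition $ap+cq=\al\be$ of the linear system, with the coordinate axes $a=0$ and $c=0$ treated on their own.
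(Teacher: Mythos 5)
Your proposal is correct and follows essentially the same route as the paper: the paper's proof consists precisely of the parametrizations $\tilde x=-ac\al$, $\tilde y+\tilde z=a^2\al$, $\tilde y-\tilde z=-c^2\al$, $\tilde p=-c\la\al+\be d\al$, etc., obtained from Lemma \ref{4lem02}, with the identification of these parametrized sets with the stated loci left implicit. Your identity $f(\tilde x,\tilde y,\tilde z,\tilde p,\tilde q)=-\al(a\tilde p+c\tilde q)^2$ and the solvability analysis for the reverse inclusion supply exactly the details the paper omits, and they check out.
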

\begin{proof}By Lemma \ref{4lem02},
	\begin{align*}
		\Pi(\alpha S^J)&=\left\lbrace \alpha G(x,y,z,p,q,r) \in {\mathfrak g}^J \;\Bigg|\; \begin{array}{c}
			x=-ac,\; y+z=a^2,\; y-z=-c^2,\\
			p=-c{\lambda},\; q=a{\lambda},\; r={\lambda}^2, \\
			ad-bc=1,\; a,b,c,d,\la,\mu \in \mathbb{R}
		\end{array}  \right\rbrace , \\
		\Pi(\alpha T^J)&=\left\lbrace \alpha G(x,y,z,p,q,r) \in {\mathfrak g}^J \;\Bigg|\; \begin{array}{c}
	x=bd,\; y+z=-b^2,\; y-z=d^2,\\
	p=d\mu,\; q=-b\mu,\; r=-\mu^2, \\
	ad-bc=1,\; a,b,c,d,\la,\mu \in \mathbb{R}
\end{array}  \right\rbrace , \\
		\hspace{-20pt}\Pi(\alpha (S^J+\beta P^J))&=\left\lbrace \alpha G(x,y,z,p,q,r) \in {\mathfrak g}^J \;\Bigg|\; \begin{array}{c}
	x=-ac,\; y+z=a^2,\; y-z=-c^2,\\
	p=-c{\lambda}+\beta d,\; q=a{\lambda}-\beta b,\; r={\lambda}^2-2\beta\mu, \\
	ad-bc=1,\; a,b,c,d,\la,\mu \in \mathbb{R}
\end{array}  \right\rbrace . \qedhere
	\end{align*}
\end{proof}

Now we can prove the following theorem.

\begin{theorem}\label{4thm01}We have a disjoint union
	\begin{eqnarray}
		{\mathcal{N}}_{\mathbb{R}}^J& = &\left\{0\right\} \bigcup \Pi(S^J)\bigcup \Pi(T^J)\bigcup \Pi(P^J)\bigcup\left(\bigcup_{\alpha\in\mathbb{R}^{\times}}\left\{ \Pi(\alpha
		R^J)\right\}\right)\label{4eq21}\\
		&& \bigcup \left(\bigcup_{\alpha\in\mathbb{R}^{\times}}\left\{ \Pi(S^J+\alpha R^J)\right\}\right)
		\bigcup \left(\bigcup_{\alpha\in\mathbb{R}^{\times}}\left\{ \Pi(\alpha(S^J+ P^J))\right\}\right).\nonumber
	\end{eqnarray}
	In particular, there are infinitely many
	nilpotent $G^J$-orbits in ${\mathcal N}_{\mathbb{R}}^J\subset {\mathfrak g}^J.$
\end{theorem}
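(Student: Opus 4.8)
The plan is to use the semidirect structure $G^J=SL(2,\BR)\ltimes H_{\BR}$ and the transformation law of Lemma \ref{4lem02} to normalize a nilpotent $G(x,y,z,p,q,r)$ in two stages: first its $\mathfrak{sl}(2,\BR)$-part $(x,y,z)$, then its Heisenberg part $(p,q,r)$. By Lemma \ref{4lem02} the triple $(\tilde x,\tilde y,\tilde z)$ depends only on $M\in SL(2,\BR)$ and transforms by the adjoint action of $SL(2,\BR)$ on $\mathfrak{sl}(2,\BR)$; hence the $SL(2,\BR)$-nilpotent class of $(x,y,z)$ is a $G^J$-invariant. Recalling the Example of Section 2, this splits $\mathcal{N}_{\BR}^J$ into three $G^J$-stable strata: $(x,y,z)=0$, the cone with $z>0$, and the cone with $z<0$. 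On the latter two the sign of $z$ is itself invariant, since $SL(2,\BR)$ preserves the two nappes $G\cdot S$ and $G\cdot T$ and the Heisenberg factor fixes $(x,y,z)$.

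On the stratum $(x,y,z)=0$ an element is $G(0,0,0,p,q,r)$, and a short computation in the spirit of Lemma \ref{4lem03} settles it: if $(p,q)=(0,0)$ it equals $rR^J$, giving $\{0\}$ when $r=0$ and the fixed point $\Pi(\alpha R^J)=\{\alpha R^J\}$ when $r=\alpha\neq0$ (here $R^J$ is central); if $(p,q)\neq(0,0)$ the $-2p\mu$ term in $\tilde r$ sweeps out $r$ and the element falls into the single orbit $\Pi(P^J)=\Pi(Q^J)$. This yields $\{0\}$, the family $\{\Pi(\alpha R^J)\}_{\alpha\neq0}$, and $\Pi(P^J)$.

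For the two nonzero strata I would first move $(x,y,z)$ by $SL(2,\BR)$ to the standard value of $S^J$ (resp. $T^J$) and then study the residual action on $(p,q,r)$. The decisive scalar is $f(x,y,z,p,q)=2pqx-p^2(y+z)+q^2(y-z)$ from Lemmas \ref{4lem03} and \ref{4lem07}: a direct check shows $f$ is constant along $G^J$-orbits inside $\mathcal{N}_{\BR}^J$, and completing the square on the cone gives $f\le0$ on the $z>0$ nappe and $f\ge0$ on the $z<0$ nappe. When $f\neq0$, Lemma \ref{4lem07} shows the orbit is one of the $\Pi(\alpha(S^J+P^J))$ and is determined by $f$ alone, via the normalization $\Pi(\alpha(S^J+\beta P^J))=\Pi(\alpha|\beta|^{2/3}(S^J+P^J))$; here $r$ is again absorbed by the $\mu$-translation in the stabilizer of $(x,y,z)$. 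When $f=0$ one normalizes further to $p=q=0$; at such a point the $-2p\mu$ term vanishes, $r$ becomes rigid, and its value is the remaining invariant, producing $\Pi(S^J)$ (resp. $\Pi(T^J)$) when $r=0$ and a shifted orbit when $r\neq0$.

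Assembling the strata gives the decomposition, and disjointness follows from the three invariants isolated above — the nappe (sign of $z$), the scalar $f$, and the residual value of $r$ on the locus $f=0$ — with Lemma \ref{4lem06} supplying the one coincidence ($\alpha G(0,1,1,p,q,r)$ and $\alpha G(0,1,1,p,0,r-\frac12 q^2)$ lie in the same orbit) needed to pin down the $f=0$ normal form. The main obstacle is exactly the bookkeeping on the $f=0$ locus: the residual $r$-invariant survives independently on each nappe, so the $z<0$ part produces a family $\Pi(T^J+\alpha R^J)$ parallel to $\Pi(S^J+\alpha R^J)$, and since no element of $G^J$ interchanges the two nappes these cannot be identified. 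I would therefore treat the exhaustiveness of (\ref{4eq21}) on the $z<0$, $f=0$ stratum with particular care, checking whether the listed orbits indeed cover it.
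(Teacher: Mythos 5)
Your strategy coincides with the paper's: stratify $\mathcal{N}_{\mathbb{R}}^J$ by the $SL(2,\mathbb{R})$-orbit of the $\mathfrak{sl}(2,\mathbb{R})$-part (zero, the nappe $z>0$, the nappe $z<0$), dispose of the zero stratum via Lemma \ref{4lem03}, and on the two nonzero strata normalize $(p,q,r)$ using the invariant $f$, Lemma \ref{4lem06} and the residual value of $r$. The one point where you hesitate --- exhaustiveness on the stratum $z<0$, $f=0$ --- is exactly where you should not back down: your analysis there is correct, and the right-hand side of (\ref{4eq21}) does \emph{not} cover that stratum. Concretely, for $\alpha\neq0$ the element $2T^J+\alpha R^J=G(0,1,-1,0,0,\alpha)$ lies in $\mathcal{N}_{\mathbb{R}}^J$ but in none of the listed orbits: its $\mathfrak{sl}(2,\mathbb{R})$-part is nonzero and lies on the nappe $z<0$, which by Lemma \ref{4lem02} rules out $\{0\}$, $\Pi(P^J)$, $\Pi(\alpha' R^J)$, $\Pi(S^J)$, $\Pi(S^J+\alpha' R^J)$, and $\Pi(\alpha'(S^J+P^J))$ with $\alpha'>0$; it has $f=0$, ruling out $\Pi(\alpha'(S^J+P^J))$ with $\alpha'<0$, on which $f=-(\alpha')^3\neq0$ by Lemma \ref{4lem07}; and the explicit description of $\Pi(T^J)$ in Lemma \ref{4lem07} forces $-b^2=0$, $d^2=2$, $d\mu=0$ and hence $r=-\mu^2=0$, ruling out $\Pi(T^J)$. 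One checks further (for instance via the quantity $r-(p^2+q^2)/(2z)$, which is constant on these orbits on the locus $z\neq0$, $f=0$) that the orbits $\Pi(T^J+\alpha R^J)$, $\alpha\in\mathbb{R}^{\times}$, are pairwise distinct and distinct from everything appearing in (\ref{4eq21}).

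So the family $\bigcup_{\alpha\in\mathbb{R}^{\times}}\Pi(T^J+\alpha R^J)=\bigcup_{\alpha\in\mathbb{R}^{\times}}\Pi(-S^J+\alpha R^J)$ must be added to the right-hand side of (\ref{4eq21}). These orbits are present in the paper's own intermediate union $\bigcup_{\alpha\in\{\pm1\},\,p,r}\Pi(\alpha S^J+pP^J+rR^J)$ as the terms with $\alpha=-1$, $p=0$, $r\neq0$, but they are dropped in the passage to the final line (\ref{4eq23}), where only the $\alpha=+1$ residual family $\Pi(S^J+\alpha R^J)$ survives. With the missing family restored, your argument is complete, the union is again disjoint (the nappe, the value of $f$, and the residual $r$ on the locus $f=0$ separate all the pieces), and the qualitative conclusion that there are uncountably many nilpotent $G^J$-orbits is unchanged.
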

\begin{proof}
	By (\ref{4eq07}), It is easily checked that
	\begin{eqnarray}
	{\Cal N}_{\BR}^{J}&=&\left\{ (X,(p,q,r))\,|\ X\in {\Cal N}_{\BR},\ p,q,r\in
	\BR\,\right\}, \nonumber \\
	&=&{\Cal N}_{\BR}^{J,0}\;\bigcup\;{\Cal N}_{\BR}^{J,+}\;\bigcup\;{\Cal N}_{\BR}^{J,-}.
	\end{eqnarray}
Here
\begin{eqnarray*}
{\Cal N}_{\BR}^{J,0}&=&
\left\{ (X,(p,q,r))\,|\ X\in \{0\} ,\ p,q,r\in
\BR\,\right\},	\\
{\Cal N}_{\BR}^{J,+}&=&
\left\{ (X,(p,q,r))\,|\ X\in {\Cal N}_{\BR}^+ ,\ p,q,r\in
\BR\,\right\},	\\
{\Cal N}_{\BR}^{J,-}&=&
\left\{ (X,(p,q,r))\,|\ X\in {\Cal N}_{\BR}^- ,\ p,q,r\in
\BR\,\right\}.	
\end{eqnarray*}
According to Lemma \ref{4lem03}\,--\,\ref{4lem07},
	\begin{eqnarray}
{\Cal N}_{\BR}^{J,0}&=&\bigcup_{p,q,r\in\mathbb{R}}\Pi(G(0,0,0,p,q,r)), \nonumber\\
&=&\Pi(P^J)\bigcup\left\{0\right\}\bigcup\left(\bigcup_{\al \in\BR^{\times}}\left\{
\Pi(\al R^J)\right\}\right).	\label{4eq22}\\
{\Cal N}_{\BR}^{J,+}\;\bigcup\;{\Cal N}_{\BR}^{J,-}&=&\bigcup_{\substack{\alpha \in  \mathbb{R}^{\times} \\ p,q,r\in\mathbb{R}}}\Pi(\alpha G(0,1,1,p,q,r)),\nonumber\\
 &=&\bigcup_{\substack{\alpha \in \{{\pm 1}\} \\ p,r\in\mathbb{R}}}(\Pi(\alpha S^J+pP^J)+\Pi(r R^J)),\nonumber\\
 & = &\Pi(S^J)\bigcup \Pi(T^J)\bigcup \left(\bigcup_{\alpha\in\mathbb{R}^{\times}}\left\{ \Pi(S^J+\alpha R^J)\right\}\right)\label{4eq23} \\
 & & \bigcup \left(\bigcup_{\alpha\in\mathbb{R}^{\times}}\left\{ \Pi(\alpha(S^J+ P^J))\right\}\right).\nonumber
	\end{eqnarray}
Clearly, the set in (\ref{4eq22}) and the set in (\ref{4eq23}) are disjoint union.
Hence, we obtain the formula (\ref{4eq21}).
\end{proof}
For $x,y,p,q\in \BC,$ we
set
\begin{equation}
H(x,y,p,q):=\begin{pmatrix} x & 0 & \ y & \ q \\ p & 0 & \ q & \ 0\\ y &
0 & -x & -p \\ 0 & 0 & \ 0 & \ 0\end{pmatrix}.
\end{equation}
Let $\fpc^J$ be
the vector space consisting of all
$H(x,y,p,q)\,(x,y,p,q\in\BC)$. Let $\fg_{\BC}^J$ be
the complexification of $\fgj$. Then we have the direct sum
\begin{equation}\label{4eq27}
\fg_{\BC}^J=\fk_{\BC}^J+\fp_{\BC}^J,
\end{equation}
where
$$\fk_{\BC}^J=\left\{ \left( \begin{pmatrix} 0 & x \\ -x & 0 \end{pmatrix},(0,0,\kappa)\right)  \bigg|\
x,\kappa\in\BC\,\right\}$$
is the complexification of the Lie algebra
$\fk^J$ of $K^J$. Let ${\Cal N}_{\theta}^J$ be the set of all
nilpotent elements in $\fp_{\BC}^J$. Then
\begin{equation}
{\Cal N}_{\theta}^J=\left\{ H(x,y,p,q)\in\fp_{\BC}^J\;\vert\
x^2+y^2=0\,\right\}.
\end{equation}
Indeed, (\ref{4eq27}) follows from the
fact that
\begin{equation}
H(x,y,p,q)^{2k}=(x^2+y^2)^{k-1}H(x,y,p,q)^2\quad\text{for\
all}\ k\in\BZ^+.
\end{equation}

We set
\begin{eqnarray*}
X_{\theta}^J&=&{\frac 12}\,H(-i,1,0,0),\\
Y_{\theta}^J&=&{\frac 12}\,H(i,1,0,0),\\
P_{\theta}^J&=&H(0,0,1,0),\\
Q_{\theta}^J&=&H(0,0,0,1).
\end{eqnarray*}

\vskip 0.35cm\noindent
Obviously the set
$\left\{X_{\theta}^J,\,Y_{\theta}^J,\,P_{\theta}^J,\,Q_{\theta}^J\right\}$
forms a basis for a complex vector space $\fp_{\BC}^J.$

\vskip 0.3cm
\noindent
\begin{proposition} $K_{\BC}^J$ acts on $\fp_{\BC}^J$
preserving ${\Cal N}_{\theta}^J$.
\end{proposition}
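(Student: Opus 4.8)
The plan is to prove this by a direct computation of the adjoint action, exactly in the spirit of Lemma \ref{4lem02}. First I would record an arbitrary element of $K_{\BC}^J$ explicitly. Since $K^J\cong K\times\BR$ with $K=SO(2)$, its complexification is $K_{\BC}^J=\{(k,(0,0,\kappa))\mid k\in SO(2,\BC),\ \kappa\in\BC\}$; writing $k=\begin{pmatrix} a & b \\ -b & a\end{pmatrix}$ with $a^2+b^2=1$ and using the embedding (\ref{4eq06}), such an element and its inverse become
\[
\widetilde k=\begin{pmatrix} a & 0 & b & 0 \\ 0 & 1 & 0 & \kappa \\ -b & 0 & a & 0 \\ 0 & 0 & 0 & 1\end{pmatrix},\qquad
\widetilde k^{-1}=\begin{pmatrix} a & 0 & -b & 0 \\ 0 & 1 & 0 & -\kappa \\ b & 0 & a & 0 \\ 0 & 0 & 0 & 1\end{pmatrix},
\]
where the inverse is read off from the general formula for $g^{-1}$ given above, using $\det k=a^2+b^2=1$.

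Next I would compute $Ad(\widetilde k)H(x,y,p,q)=\widetilde k\,H(x,y,p,q)\,\widetilde k^{-1}$ as a $4\times4$ matrix product. This single step establishes both halves of the proposition: the product must again have the shape $H(\tilde x,\tilde y,\tilde p,\tilde q)$, which shows that $\fpc^J$ is preserved (equivalently, that $Ad(K_{\BC}^J)$ does not mix $\fpc^J$ with $\fk_{\BC}^J$ in the decomposition (\ref{4eq27})), and it simultaneously yields explicit formulas for the new parameters. A routine multiplication gives
\[
Ad(\widetilde k)H(x,y,p,q)=H\big((a^2-b^2)x+2ab\,y,\ (a^2-b^2)y-2ab\,x,\ ap+bq,\ aq-bp\big),
\]
so in particular the central parameter $\kappa$ acts trivially and the result indeed lies in $\fpc^J$.

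Finally, to see that ${\Cal N}_{\theta}^J$ is preserved, I would verify that the defining quadratic $x^2+y^2$ is invariant. Setting $c=a^2-b^2$ and $s=2ab$, the new values satisfy $\tilde x=cx+sy$ and $\tilde y=cy-sx$, whence
\[
\tilde x^2+\tilde y^2=(c^2+s^2)(x^2+y^2)=(a^2+b^2)^2(x^2+y^2)=x^2+y^2,
\]
using $a^2+b^2=1$. Thus $x^2+y^2=0$ forces $\tilde x^2+\tilde y^2=0$, so $Ad(\widetilde k)$ maps ${\Cal N}_{\theta}^J$ into itself, and applying the same reasoning to $\widetilde k^{-1}$ gives equality.

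I expect no genuine obstacle here: the whole argument is bookkeeping in one matrix product, and the only point requiring care is confirming that the zero/symmetry pattern of $H(x,y,p,q)$ is exactly reproduced so that closure of $\fpc^J$ holds. That closure could alternatively be argued abstractly from (\ref{4eq27}) together with the fact that $Ad(K_{\BC}^J)$ commutes with the involution defining the decomposition, but the direct computation is cleaner and has the advantage of producing, for free, the invariance of $x^2+y^2$ that the claim about ${\Cal N}_{\theta}^J$ needs.
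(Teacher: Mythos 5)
Your proposal is correct and follows essentially the same route as the paper: write $k^J\in K_{\BC}^J$ as the explicit $4\times 4$ matrix in the $Sp(4)$ embedding, compute the conjugation to get $x^{\ast}=(a^2-b^2)x+2aby$, $y^{\ast}=-2abx+(a^2-b^2)y$, $p^{\ast}=ap+bq$, $q^{\ast}=aq-bp$, and conclude from $(x^{\ast})^2+(y^{\ast})^2=(a^2+b^2)^2(x^2+y^2)$ that ${\Cal N}_{\theta}^J$ is preserved. The only additions beyond the paper's argument are the explicit inverse matrix and the remark that the quadratic is in fact exactly invariant, neither of which changes the substance.
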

\begin{proof}
	An element $k^J$ of $K_{\BC}^J$ is of the form
	\begin{equation}
	k^J=\begin{pmatrix} a & 0 & b & 0 \\ 0 & 1 & 0 & \kappa
	\\ -b & 0 & a & 0 \\ 0 & 0 & 0 & 1\end{pmatrix},\quad a,b,\kappa \in\BC,\
	a^2+b^2=1.
	\end{equation}
	We obtain
	$$k^J\cdot
	H(x,y,p,q)=H(x^{\ast},y^{\ast},p^{\ast},q^{\ast}),$$
	where
	\begin{eqnarray}\label{4eq30}
	x^{\ast}&=&(a^2-b^2)x+2aby,\nonumber\\
	y^{\ast}&=&-2abx+(a^2-b^2)y,\nonumber\\
	p^{\ast}&=&ap+bq,\\
	q^{\ast}&=& aq-bp.\nonumber
	\end{eqnarray}
	If $H(x,y,p,q)\in
	{\Cal N}_{\theta}^J,$ then
	$$(x^{\ast})^2+(y^{\ast})^2=(a^2+b^2)^2(x^2+y^2)=0.$$
	Therefore
	$k^J\cdot {\Cal N}_{\theta}^J\subset {\Cal N}_{\theta}^J$ for each
	element $k^J\in K_{\BC}^J.$
\end{proof}

\vskip 0.5cm
We define the $K_{\BC}^J$-orbits ${\Cal N}_{\theta}^{J,+}$ and
${\Cal N}_{\theta}^{J,-}$ by
$${\Cal N}_{\theta}^{J,+}=K_{\BC}^J\cdot X_{\theta}^J\quad\text{and}\quad
{\Cal N}_{\theta}^{J,-}=K_{\BC}^J\cdot Y_{\theta}^J.$$
It is easily checked that ${\Cal N}_{\theta}^{J,+}$ and
${\Cal N}_{\theta}^{J,-}$ are given by
\begin{eqnarray} \label{4eq32}
{\Cal N}_{\theta}^{J,+}&=&\left\lbrace H(x,y,0,0) \in {\mathfrak g}_{\mathbb{C}}^J \;\Bigg|\; \begin{array}{c}
x=-i  a^{2} + 2  a b + i  b^{2},\; y= a^{2} + 2 i  a b - b^{2}\nonumber\\
a^2+b^2=1, \,  a,b\in \mathbb{C}
\end{array}  \right\rbrace , \\
						&=&\left\{H(x,ix,0,0)\,\vert\ x\neq 0,\ x\in\BC\,\right\},
\end{eqnarray}
and
\begin{eqnarray}
{\Cal N}_{\theta}^{J,-}&=&\left\lbrace H(x,y,0,0) \in {\mathfrak g}_{\mathbb{C}}^J \;\Bigg|\; \begin{array}{c}
x=i  a^{2} + 2 a b - i b^{2},\; y= a^{2} - 2 i a b - b^{2} \nonumber\\
a^2+b^2=1, \,  a,b\in \mathbb{C}
\end{array}  \right\rbrace , \\
&=&\left\{H(x,-ix,0,0)\,\vert\ x\neq 0,\ x\in\BC\,\right\}.
\end{eqnarray}
We define, for a nonzero
complex number $\delta\in\BC$,
$${\Cal N}_{\theta}^{J,P}(\delta)=K_{\BC}^J\cdot \delta
P_{\theta}^J\quad\text{and}\quad {\Cal
N}_{\theta}^{J,Q}(\delta)=K_{\BC}^J\cdot \delta Q_{\theta}^J.$$
Then
\begin{equation}
{\Cal N}_{\theta}^{J,P}(\delta)={\Cal
N}_{\theta}^{J,Q}(\delta)=\left\{ H(0,0,p,q)\;\vert\
p^2+q^2=\delta^2,\ p,q\in\BC\,\right\}.
\end{equation}
\begin{lemma}\label{4lem05}
$H(x,y,\delta,0)$ and $H({\tilde x},{\tilde
y},\delta,0)$ lie in the same $K_{\BC}$-orbit in $\fpc^J$ if and
only if ${\tilde x}=x,\ {\tilde y}=y.$
\end{lemma}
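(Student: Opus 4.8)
The plan is to read off the conclusion directly from the explicit action formulas in (\ref{4eq30}). The essential structural feature to exploit is that the $K_{\BC}^J$-action on the last two entries $(p,q)$ of $H(x,y,p,q)$ is \emph{decoupled} from its action on the first two entries $(x,y)$: by (\ref{4eq30}) the pair $(p^{\ast},q^{\ast})$ depends only on the parameters $a,b$ (subject to $a^2+b^2=1$) and on $(p,q)$, never on $(x,y)$. This decoupling is what lets one pin down the group element from the behaviour on the $(p,q)$-slot alone.

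First I would suppose that some $k^J\in K_{\BC}^J$, specified by parameters $(a,b,\kappa)$ with $a^2+b^2=1$, satisfies $k^J\cdot H(x,y,\delta,0)=H(\tilde x,\tilde y,\delta,0)$. Comparing the third and fourth coordinates via (\ref{4eq30}) gives the two equations $p^{\ast}=a\delta=\delta$ and $q^{\ast}=-b\delta=0$. Since $\delta$ is a nonzero complex number, these force $a=1$ and $b=0$; note that this solution is automatically consistent with the constraint $a^2+b^2=1$, so no parameters remain free.

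Having pinned down $a=1,\,b=0$, I substitute back into the formulas for $x^{\ast}$ and $y^{\ast}$: since then $a^2-b^2=1$ and $2ab=0$, equation (\ref{4eq30}) yields $x^{\ast}=x$ and $y^{\ast}=y$, that is $\tilde x=x$ and $\tilde y=y$, which is the ``only if'' direction. The converse is immediate: if $\tilde x=x$ and $\tilde y=y$, then $H(x,y,\delta,0)$ and $H(\tilde x,\tilde y,\delta,0)$ are literally the same element of $\fpc^J$, hence trivially lie in a common $K_{\BC}^J$-orbit (the identity element $a=1,\,b=0,\,\kappa=0$ carries one to the other).

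I do not anticipate a genuine obstacle here; the lemma is a direct consequence of the decoupling noted above. The one point demanding care is the hypothesis $\delta\neq 0$, which is precisely what makes the system $a\delta=\delta,\ b\delta=0$ collapse to the single solution $a=1,\,b=0$ rather than leaving a positive-dimensional family. Were $\delta=0$ allowed, the argument would fail, since then a nontrivial rotation $(a,b)\neq(1,0)$ could mix $x$ and $y$ while fixing the (now vanishing) $(p,q)$-slot; so the sharpness of the statement rests entirely on this nonvanishing assumption.
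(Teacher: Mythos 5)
Your argument is exactly the paper's proof, just written out in more detail: from (\ref{4eq30}) the conditions $a\delta=\delta$ and $-b\delta=0$ with $\delta\neq 0$ force $a=1$, $b=0$, whence $\tilde x=x$ and $\tilde y=y$. Your added remark that the nonvanishing of $\delta$ is essential is a correct and worthwhile observation, but the route is the same.
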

\begin{proof}
By (\ref{4eq30}), we have $a=1$ and $b=0.$ Hence ${\tilde x}=x,\ {\tilde
	y}=y.$
\end{proof}
\begin{lemma}
Let $(x,y)\in \BC^2$ with $(x,y)\neq (0,0).$
Then $H(x,y,\delta,0)$ and $H(x,y,{\tilde{\delta}},0)$ lie in the same $K_{\BC}^J$-orbit in $\fpc^J$ if and only if
${\tilde{\delta}}=\pm \delta.$
\end{lemma}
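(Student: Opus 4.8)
The plan is to translate the orbit-membership condition directly into the explicit $K_\BC^J$-action formula (\ref{4eq30}) and read off what it forces on the parameters of $k^J$. Recall from the preceding proposition that every $k^J\in K_\BC^J$ is determined by $a,b,\kappa\in\BC$ with $a^2+b^2=1$, and that $\kappa$ does not occur in (\ref{4eq30}), so it plays no role. The two elements lie in the same orbit precisely when there is such a $k^J$ with $k^J\cdot H(x,y,\delta,0)=H(x,y,\tilde\delta,0)$; substituting $p=\delta,\,q=0$ into (\ref{4eq30}) and equating with the target, this is the system
\begin{equation*}
(a^2-b^2)x+2ab\,y=x,\quad -2ab\,x+(a^2-b^2)y=y,\quad a\delta=\tilde\delta,\quad -b\delta=0.
\end{equation*}

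For the sufficiency direction ($\tilde\delta=\pm\delta\Rightarrow$ same orbit) I would simply exhibit the required $k^J$: taking $b=0$ with $a=1$ (resp.\ $a=-1$) satisfies $a^2+b^2=1$ and yields $x^{\ast}=a^2x=x$, $y^{\ast}=a^2y=y$, $p^{\ast}=a\delta=\pm\delta$ and $q^{\ast}=0$, so $H(x,y,\delta,0)$ is carried to $H(x,y,\delta,0)$ (resp.\ $H(x,y,-\delta,0)$). This direction is immediate and uses nothing about $(x,y)$.

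The necessity direction is the crux, and it is exactly here that the hypothesis $(x,y)\neq(0,0)$ enters. Assuming the system above holds, I would read the first two equations as a homogeneous linear system
\begin{equation*}
\begin{pmatrix} a^2-b^2-1 & 2ab \\ -2ab & a^2-b^2-1 \end{pmatrix}\begin{pmatrix} x \\ y \end{pmatrix}=\begin{pmatrix} 0 \\ 0 \end{pmatrix}.
\end{equation*}
Because $(x,y)\neq(0,0)$ is a nontrivial solution, the coefficient determinant must vanish, i.e.\ $(a^2-b^2-1)^2+4a^2b^2=0$. The one genuine computation is to simplify this using $a^2+b^2=1$: since $a^2-b^2-1=-2b^2$, the determinant collapses to $4b^4+4a^2b^2=4b^2(a^2+b^2)=4b^2$. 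Hence $b=0$, whence $a^2=1$ and $a=\pm1$, and the third equation gives $\tilde\delta=a\delta=\pm\delta$.

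The main obstacle—really the only nonroutine point—is recognizing that the two $(x,y)$-preserving equations should be treated as a singular homogeneous system, and that the vanishing determinant, after invoking $a^2+b^2=1$, forces $b=0$. I note that one could alternatively obtain $b=0$ straight from $-b\delta=0$ when $\delta\neq0$, but the determinant route is preferable because it uses precisely the stated hypothesis $(x,y)\neq(0,0)$ and requires no assumption on $\delta$.
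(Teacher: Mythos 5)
Your proof is correct and follows essentially the same route as the paper: both translate orbit membership into the explicit action formula (\ref{4eq30}) and conclude $b=0$, hence $a=\pm1$ and ${\tilde\delta}=a\delta=\pm\delta$. The paper's proof merely asserts that (\ref{4eq30}) forces $b=0$ without indicating which equation is responsible; your determinant computation on the homogeneous system fixing $(x,y)$ supplies that missing justification, and it is the preferable one since it uses exactly the hypothesis $(x,y)\neq(0,0)$ and does not require $\delta\neq0$, which the alternative route via $-b\delta=0$ would need.
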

\begin{proof}
	According to (\ref{4eq30}), $b=0$ and so $a=\pm 1$.
	Since ${\tilde{\delta}}=a \delta,\ {\tilde{\delta}}=\pm \delta$.
\end{proof}
\begin{lemma}
Suppose $H(x,y,p,q)\in {\Cal N}_{\theta}^J$
with $y=\xi x,$ where $\xi=i$ or $-i$. If $H({\tilde x},{\tilde
y},{\tilde p},{\tilde q})$ is in the $K_{\BC}$-orbit of
$H(x,y,p,q)$ in $\fpc^J$, then ${\tilde y}=\xi{\tilde x}.$
\end{lemma}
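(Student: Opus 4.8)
The plan is to read everything off the explicit description of the $K_{\BC}^J$-action in \eqref{4eq30}. Since $H(\tilde x,\tilde y,\tilde p,\tilde q)$ lies in the orbit of $H(x,y,p,q)$, there is an element $k^J\in K_{\BC}^J$, determined by parameters $a,b\in\BC$ with $a^2+b^2=1$, such that $(\tilde x,\tilde y,\tilde p,\tilde q)=(x^{\ast},y^{\ast},p^{\ast},q^{\ast})$ in the notation of \eqref{4eq30}. In particular the pair $(\tilde x,\tilde y)$ is obtained from $(x,y)$ by
\[
\tilde x=(a^2-b^2)x+2ab\,y,\qquad \tilde y=-2ab\,x+(a^2-b^2)y ,
\]
so the whole argument reduces to the two-dimensional linear action on the $(x,y)$-plane; the values of $p$ and $q$ play no role.

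First I would substitute the hypothesis $y=\xi x$ into these two identities, obtaining $\tilde x=\bigl((a^2-b^2)+2ab\,\xi\bigr)x$ and $\tilde y=\bigl(-2ab+(a^2-b^2)\xi\bigr)x$. Then I would form the combination $\tilde y-\xi\tilde x$; after expanding and collecting, its $x$-coefficient equals $-2ab-2ab\,\xi^2=-2ab\,(1+\xi^2)$. Because $\xi=\pm i$ we have $\xi^2=-1$, so this coefficient vanishes and hence $\tilde y=\xi\tilde x$, which is precisely the assertion. This is the entire content of the lemma, and I do not expect any genuine obstacle: the single decisive ingredient is the relation $\xi^2=-1$ together with the fact that the $(p,q)$-coordinates decouple from $(x,y)$ in \eqref{4eq30}.

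Conceptually I would also record why this is forced. Writing $a=\cos\theta,\ b=\sin\theta$ (permissible over $\BC$ since $a^2+b^2=1$), the matrix $\left(\begin{smallmatrix} a^2-b^2 & 2ab\\ -2ab & a^2-b^2\end{smallmatrix}\right)=\left(\begin{smallmatrix}\cos2\theta & \sin2\theta\\ -\sin2\theta & \cos2\theta\end{smallmatrix}\right)$ governing the $(x,y)$-action is a rotation in $SO(2,\BC)$, whose eigenvectors are exactly $(1,i)$ and $(1,-i)$. The hypothesis $y=\xi x$ says that $(x,y)$ is proportional to the eigenvector $(1,\xi)$, and an eigenline of a rotation is preserved by that rotation; therefore $(\tilde x,\tilde y)$ again lies on the line $\tilde y=\xi\tilde x$. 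This viewpoint also matches, in hindsight, the descriptions of ${\Cal N}_{\theta}^{J,+}$ and ${\Cal N}_{\theta}^{J,-}$ as the two lines $y=\pm ix$ recorded in \eqref{4eq32}.
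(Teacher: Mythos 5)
Your proposal is correct and follows essentially the same route as the paper: substitute $y=\xi x$ into the orbit formulas (\ref{4eq30}) and use $\xi^2=-1$ to conclude $\tilde y=\xi\tilde x$ (the paper just packages the same computation as $\tilde x=(a+\xi b)^2x$, $\tilde y=\xi(a+\xi b)^2x$). The eigenvector remark is a pleasant conceptual gloss but not a different argument.
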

\begin{proof}
According to (\ref{4eq30}), we obtain
$${\tilde x}=(a+\xi b)^2x,\quad {\tilde y}=\xi (a+\xi b)^2x.$$
Hence ${\tilde y}=\xi{\tilde x}.$
\end{proof}

\begin{lemma}\label{4lem08}
Let $x,\delta\in \BC$ with $x\neq 0$. We denote
by ${\Cal N}_{\theta}^{J,+}(x,\delta)$ and ${\Cal N}_{\theta}^{J,-}(x,\delta)$ the $K_{\BC}$-orbits of
$H(x,ix,\delta,0)$ and $H(x,-ix,\delta,0)$ respectively. Then
${\Cal N}_{\theta}^{J,+}(x,\delta)$ and ${\Cal
N}_{\theta}^{J,-}(x,\delta)$ are given by
\begin{equation}\label{4eq35}
{\Cal N}_{\theta}^{J,+}(x,\delta)=\left\{ H(z,iz,p,q)\;\vert\ z,p,q\in
\BC,\ p^2+q^2=\delta^2\,\right\}
\end{equation}
and
\begin{equation}\label{4eq36}
{\Cal N}_{\theta}^{J,-}(x,\delta)=\left\{ H(z,-iz,p,q)\;\vert\
z,p,q\in \BC,\ p^2+q^2=\delta^2\,\right\}.
\end{equation}
\end{lemma}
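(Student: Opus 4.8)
The plan is to compute the $K_{\BC}^J$-orbit of the base point directly from the explicit action formula (\ref{4eq30}). The key structural remark is that the parameter $\kappa$ does not occur in (\ref{4eq30}), so the action on $\fpc^J$ factors through the one-parameter group $\{(a,b)\mid a^2+b^2=1\}$. To exploit this I would set $w=a+ib$; since $a^2+b^2=(a+ib)(a-ib)=1$ this means $w\neq 0$ and $a-ib=w^{-1}$, so $a=\tfrac12(w+w^{-1})$ and $b=\tfrac1{2i}(w-w^{-1})$. Feeding $(x,y,p,q)=(x,ix,\delta,0)$ into (\ref{4eq30}) and simplifying, I expect
\[
x^{\ast}=x(a+ib)^2=xw^2,\quad y^{\ast}=ixw^2,\quad p^{\ast}=a\delta,\quad q^{\ast}=-b\delta,
\]
which I would record as a one-parameter description of $\mathcal N_{\theta}^{J,+}(x,\delta)$ in terms of $w$.

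For the inclusion of the orbit in the right-hand side of (\ref{4eq35}) two invariances suffice. First, $y^{\ast}=ix^{\ast}$ is precisely the statement of the lemma immediately preceding this one, namely that the line $y=\xi x$ (here $\xi=i$) is preserved by $K_{\BC}^J$; hence every orbit point has the form $H(z,iz,p,q)$. Second, the same kind of computation from (\ref{4eq30}) as in the proof of the Proposition gives $(p^{\ast})^2+(q^{\ast})^2=(a^2+b^2)(p^2+q^2)$, so that $p^2+q^2=\delta^2$ is an invariant of the action. Together these yield $\mathcal N_{\theta}^{J,+}(x,\delta)\subseteq\{H(z,iz,p,q)\mid p^2+q^2=\delta^2\}$, and the case of $\mathcal N_{\theta}^{J,-}(x,\delta)$ follows verbatim after exchanging $i$ and $-i$.

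The reverse inclusion is the heart of the argument and is where I expect the real difficulty to lie. Given a target $H(z,iz,p,q)$ with $p^2+q^2=\delta^2$, I must exhibit $(a,b)$ with $a^2+b^2=1$ solving $a\delta=p$, $-b\delta=q$ and $x(a+ib)^2=z$ simultaneously. When $\delta\neq0$ the first two equations force $a=p/\delta$ and $b=-q/\delta$, and then $a^2+b^2=(p^2+q^2)/\delta^2=1$ holds automatically; the degenerate case $\delta=0$ I would dispose of separately by relating the base point to $X_{\theta}^J$ and invoking (\ref{4eq32}). The subtle point is the third equation: once $(a,b)$ is determined by $(p,q)$, the value of $z=x(a+ib)^2$ is also determined, so the crux is to show that as $(p,q)$ runs over the conic $p^2+q^2=\delta^2$ and $z$ runs freely the displayed set in (\ref{4eq35}) is genuinely filled out. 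I would therefore concentrate my effort on reconciling the single complex parameter $w$ of the acting group with the two complex parameters describing the claimed orbit; establishing this surjectivity carefully — rather than the routine forward computation — is the step I regard as the main obstacle.
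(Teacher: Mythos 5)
Your forward computation is essentially the paper's \emph{entire} proof: the paper likewise writes ${\tilde x}=(a+ib)^2x$, ${\tilde y}=i(a+ib)^2x$, ${\tilde p}=a\delta$, ${\tilde q}=-b\delta$, concludes ${\tilde y}=i{\tilde x}$ and ${\tilde p}^2+{\tilde q}^2=\delta^2$, and then simply asserts (\ref{4eq35}); the reverse inclusion is never addressed. So the step you isolate as the main obstacle is precisely the step the paper omits.

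Your suspicion about that step is well founded: the reverse inclusion is false, so the surjectivity you were trying to establish cannot be proved. Since $\kappa$ does not enter (\ref{4eq30}), the orbit of $H(x,ix,\delta,0)$ is the image of the one-complex-dimensional group $\{(a,b)\mid a^2+b^2=1\}$; in your coordinate $w=a+ib$ it is exactly
\begin{equation*}
\left\{\,H\!\left(w^2x,\ iw^2x,\ \tfrac{\delta}{2}(w+w^{-1}),\ \tfrac{i\delta}{2}(w-w^{-1})\right)\ \Big\vert\ w\in\BC^{\times}\,\right\},
\end{equation*}
a one-parameter family, whereas the right-hand side of (\ref{4eq35}) has two independent parameters (a free $z$ together with the one-dimensional conic $p^2+q^2=\delta^2$). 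Concretely, for $x=\delta=1$ the point $H(1,i,0,1)$ satisfies $p^2+q^2=\delta^2$ but is not in the orbit of $H(1,i,1,0)$: forcing $(a+ib)^2=1$ together with $a^2+b^2=1$ gives $b=0$, $a=\pm1$, hence $(p^{\ast},q^{\ast})=(\pm1,0)\neq(0,1)$. As you noted, for $\delta\neq0$ the pair $(p,q)$ already determines $(a,b)$ and hence forces $z=x(a+ib)^2$, so $z$ cannot vary independently; the same collapse occurs for $\delta=0$. Your attempt is therefore incomplete only because the missing step is genuinely unprovable: the correct statement of the lemma must replace the sets in (\ref{4eq35})--(\ref{4eq36}) by the one-parameter family displayed above (equivalently, add an equation coupling $z$ to $(p,q)$), and the decomposition in Theorem \ref{4thm02}, which relies on this lemma, must be adjusted accordingly.
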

\begin{proof}
	If $H({\tilde x},{\tilde y},{\tilde p},{\tilde q})$ is
	an element of ${\Cal N}_{\theta}^{J,+}(x,\delta)$, then there
	exist $a,b\in\BC$ with $a^2+b^2=1$ satisfying
	$${\tilde x}=(a+ib)^2x,\ {\tilde y}=i(a+ib)^2x,\ {\tilde p}=a\delta,\
	{\tilde q}=-b\delta.$$
	Thus ${\tilde y}=i{\tilde
		x},\ {\tilde p}^2+{\tilde q}^2=(a^2+b^2)\delta^2=\delta^2.$ Hence
	we obtain the formula (\ref{4eq35}). In a similar way, we get the formula
	(\ref{4eq36}).
\end{proof}

\vskip 0.5cm
According to (\ref{4eq32})\,--\,(\ref{4eq36}), Lemma \ref{4lem05}\,--\,Lemma \ref{4lem08}, we obtain the
following theorem.

\vskip 0.5cm
\noindent
\begin{theorem}\label{4thm02}
	We have the following disjoint union
	\begin{eqnarray*}
		{\mathcal{N}}_{\theta}^J&=& \left\{0\right\}\,\bigcup\,{\mathcal{N}}_{\theta}^{J,+}\,\bigcup\, {\mathcal{N}}_{\theta}^{J,-}\,\bigcup\,\left( \bigcup_{\delta\in
			\mathbb{C}^{\times}/\{\pm 1\} }{\mathcal{N}}_{\theta}^{J,P}(\delta)\right)  \\
		& & \ \  \bigcup\left(\bigcup_{\substack{ x\in \mathbb{C}^{\times}\\
				\delta\in {\mathbb{C}}^{\times}/\{\pm 1\} }} {\mathcal{N}}_{\theta}^{J,+}(x,\delta)\right) \bigcup\, \left(
		\bigcup_{\substack{x\in \mathbb{C}^{\times}\\ \delta\in {\mathbb{C}}^{\times}/\{\pm 1\} }} {\mathcal{N}}_{\theta}^{J,-}(x,\delta)\right).\,
	\end{eqnarray*}
	In particular, there are
	infinitely  many nilpotent $K_{\mathbb{C}}^J$-orbits in ${\mathcal{N}}_{\theta}^J\subset {\mathfrak {p}}_{\mathbb{C}}^J.$
\end{theorem}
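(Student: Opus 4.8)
The plan is to stratify the nilpotent cone $\mathcal{N}_\theta^J=\{H(x,y,p,q)\mid x^2+y^2=0\}$ by the locus of the pair $(x,y)$ and then read off the $K_\mathbb{C}^J$-orbits in each stratum from the preceding lemmas. Since $x^2+y^2=0$ is equivalent to $y=ix$ or $y=-ix$, every element lies in exactly one of three $K_\mathbb{C}^J$-stable pieces: the locus $x=y=0$, the branch $y=ix$ with $x\neq0$, and the branch $y=-ix$ with $x\neq0$. The invariance of these branches is the content of the lemma asserting that $y=\xi x$ (with $\xi=\pm i$) is preserved by the action, which is immediate from the transformation law $\tilde x=(a+\xi b)^2x$, $\tilde y=\xi(a+\xi b)^2x$ read off from (\ref{4eq30}). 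It therefore suffices to decompose each stratum into orbits and afterwards check that the resulting families are pairwise disjoint and exhaust $\mathcal{N}_\theta^J$.

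On the stratum $x=y=0$ the action (\ref{4eq30}) reduces to the standard $SO(2,\mathbb{C})$-action $(p,q)\mapsto(ap+bq,\,aq-bp)$ with invariant $p^2+q^2$. The origin is the orbit $\{0\}$; for each value $p^2+q^2=\delta^2\neq0$ the action is transitive on the smooth conic $\{p^2+q^2=\delta^2\}$, so the orbit is exactly $\mathcal{N}_\theta^{J,P}(\delta)$. Since $\delta$ and $-\delta$ cut out the same conic (this is the lemma giving $\tilde\delta=\pm\delta$), these orbits are indexed by $\delta\in\mathbb{C}^\times/\{\pm1\}$, which yields the summand $\bigcup_\delta\mathcal{N}_\theta^{J,P}(\delta)$ together with $\{0\}$.

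On the branch $y=ix$, $x\neq0$, I would invoke Lemma \ref{4lem08} with (\ref{4eq35}): the orbit through $H(x,ix,\delta,0)$ is $\mathcal{N}_\theta^{J,+}(x,\delta)$. Taking $\delta=0$ (that is, $p=q=0$) recovers $\mathcal{N}_\theta^{J,+}=K_\mathbb{C}^J\cdot X_\theta^J$ of (\ref{4eq32}), whereas $\delta\neq0$ produces the family $\bigcup_{x,\delta}\mathcal{N}_\theta^{J,+}(x,\delta)$, with $x\in\mathbb{C}^\times$ labelling the orbit and $\delta\sim-\delta$ as above. The branch $y=-ix$ is treated identically via (\ref{4eq36}), giving $\mathcal{N}_\theta^{J,-}$ and $\bigcup_{x,\delta}\mathcal{N}_\theta^{J,-}(x,\delta)$. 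Assembling the three strata reproduces the right-hand side of the asserted identity.

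The remaining and most delicate step is disjointness and exhaustion. Exhaustion is clear, since every $H(x,y,p,q)\in\mathcal{N}_\theta^J$ lies on one of the three strata. For disjointness the coarse separator is the branch datum---the value $y/x$, or its being undefined when $x=0$---which isolates $\{\mathcal{N}_\theta^{J,+},\mathcal{N}_\theta^{J,+}(x,\delta)\}$, $\{\mathcal{N}_\theta^{J,-},\mathcal{N}_\theta^{J,-}(x,\delta)\}$ and $\{\{0\},\mathcal{N}_\theta^{J,P}(\delta)\}$ from one another; within a single branch the invariant $p^2+q^2=\delta^2$ together with the labelling of Lemma \ref{4lem05} separates the individual orbits. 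The points that must be checked by hand are the degenerate overlaps: one verifies that in $\mathcal{N}_\theta^{J,\pm}(x,\delta)$ the first coordinate $z=(a+ib)^2x$ never vanishes, so these orbits stay strictly inside the branches $x\neq0$ and cannot meet the planar family $\mathcal{N}_\theta^{J,P}(\delta)$, and one disposes of the null locus $p^2+q^2=0$ separately. This is the step I expect to be the main obstacle. Finally, because $\delta$ already ranges over the uncountable set $\mathbb{C}^\times/\{\pm1\}$, the last three families contain infinitely many distinct orbits, giving the concluding statement.
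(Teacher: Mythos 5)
Your stratification by the branches $y=ix$, $y=-ix$ and $x=y=0$ is exactly the route the paper takes (its ``proof'' is only the one-line citation of (\ref{4eq32})--(\ref{4eq36}) and Lemmas \ref{4lem05}--\ref{4lem08}), and your treatment of the generic loci is fine. But the step you defer --- ``one disposes of the null locus $p^2+q^2=0$ separately'' --- is not a loose end that can be tidied up later: it is precisely where the claimed exhaustion fails, so no argument will close it as the statement stands. The element $H(0,0,1,i)$ lies in ${\mathcal N}_{\theta}^J$, and since $p^2+q^2$ is a $K_{\mathbb C}^J$-invariant its orbit is $\left\{H(0,0,t,it)\mid t\in{\mathbb C}^{\times}\right\}$; this meets neither $\{0\}$ nor any ${\mathcal N}_{\theta}^{J,P}(\delta)$ with $\delta\in{\mathbb C}^{\times}$, nor anything on the two branches with nonzero first coordinate. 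Likewise $H(x,ix,p,ip)$ with $x\neq 0$, $p\neq 0$ is nilpotent but lies in none of the listed families, and such elements form uncountably many further orbits (the ratio $x/p^{2}$ is an invariant there, since $\tilde{x}=(a+ib)^{2}x$ and $\tilde{p}=(a+ib)p$). So either the right-hand side must be enlarged by these isotropic families or the statement must be amended; your proof cannot be completed without one of these repairs.

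A second, smaller point: you cite (\ref{4eq35}) as giving the orbit of $H(x,ix,\delta,0)$ ``with $x$ labelling the orbit,'' but the set displayed in (\ref{4eq35}) does not depend on $x$ at all and is two-dimensional, whereas the actual $K_{\mathbb C}^J$-orbit is the one-parameter set $\left\{H\bigl((a+ib)^{2}x,\,i(a+ib)^{2}x,\,a\delta,\,-b\delta\bigr)\mid a^{2}+b^{2}=1\right\}$; the proof of Lemma \ref{4lem08} only establishes that the orbit is \emph{contained} in the displayed set. Your assertion that $x$ separates the orbits is correct, but it must be argued from the true orbits (the normalization $(p,q)=(\delta,0)$ forces $a=\pm1$, $b=0$, which is Lemma \ref{4lem05}), not from (\ref{4eq35}): as written, the union over $x\in{\mathbb C}^{\times}$ of the sets in (\ref{4eq35}) is a union of identical sets, so the ``disjoint union'' in the theorem is not even internally consistent with that formula. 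Make the distinction between the orbit and the set in (\ref{4eq35}) explicit before using either.
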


\vskip 0.5cm\noindent
\begin{remark}
It is known that if $G$ is a real reductive
Lie group, there are only finitely many nilpotent orbits and that
there is the so-called Kostant-Sekiguchi correspondence between
the set of all nilpotent $G$-orbits in $\fg$ and the set of all
nilpotent $K_{\BC}$-orbits in $\fpc,$ where $K_{\BC}$ is the
complexification of a maximal compact subgroup $K$ of $G$ and
$\fgc=\fkc+\fpc$ is the Cartan decomposition of the
complexification $\fgc$ of $\fg$\,(cf.\,\cite{S-V, Se, Ve, Vo}).
We refer to \cite{C} for adjoint orbits of complex semisimple groups.
\end{remark}

\end{section}

\vskip 1.5cm

\vskip 1.5cm

\end{document}